\documentclass{amsart}    
\usepackage{amsmath} 
\usepackage{paralist}
\usepackage{cancel} 
\usepackage{graphics} 
\usepackage{epsfig} 
\usepackage{graphicx}  
\usepackage{float}
\usepackage{epstopdf} 
\usepackage{graphicx}
\usepackage{mathrsfs,dsfont}
\usepackage[colorlinks=true]{hyperref}
\hypersetup{urlcolor=blue, citecolor=red}
 \usepackage{tikz}
\newcommand{\gl}{\mathfrak{\gl}}

\renewcommand{\epsilon}{\varepsilon}

\def\<{\mathopen{}\left<}
\def\>{\right>\mathclose{}}
\def\({\mathopen{}\left(}
\def\){\right)\mathclose{}}

\newtheorem{theorem}{Theorem}
\newtheorem{prop}{Proposition}
\newtheorem{cor}{Corollary}

\newtheorem{lemma}[prop]{Lemma}

\newtheorem{remark}{Remark}

\newtheorem{definition}{Definition}

\usepackage{amssymb}

\title[\bf Symmetries and the First Laplace Eigenvalue of Lawson Surfaces]
{Symmetries and the First Laplace Eigenvalue of Lawson Surfaces}

\author{Julieth Saavedra$^*$}
\address{Escuela de Ciencias Físicas y Matemáticas, Universidad de Las Américas, V\'ia a Nay\'on, C.P.170124, Quito, Ecuador}
\thanks{$^*$ Autor de correspondencia: \texttt{julieth.saavedra@udla.edu.ec}}

\author{A. J. Castrillón Vásquez$^\dagger$}
\address{Department of Basic Sciences, Antonio José Camacho University Institution, Cali, 760046, Valle del Cauca, Colombia}
\thanks{$^\dagger$ Autor de correspondencia: \texttt{ajcastrillon@profesores.uniajc.edu.co}}

\subjclass[2020]{Primary: 53A10, 58J50; Secondary: 53C42, 20H15}
\keywords{Lawson, Eigenvalues, Symmetric Surfaces, Hypersurfaces}
\begin{document}
	\maketitle

\begin{center}
\begin{minipage}{5cm}
\centerline{\scshape Julieth Saavedra$^*$}
\medskip
{\footnotesize
\centerline{Escuela de Ciencias Físicas y Matemáticas}
\centerline{Universidad de Las Américas}
\centerline{V\'ia a Nay\'on, C.P.170124, Quito, Ecuador}
}
\end{minipage}
\hspace{1cm}
\begin{minipage}{6cm}
\centerline{\scshape A.\,J. Castrill\'on V\'asquez$^\dagger$}
\medskip
{\footnotesize
\centerline{Department of Basic Sciences}
\centerline{Antonio José Camacho University Institution}
\centerline{Cali, 760046, Valle del Cauca, Colombia}
}
\end{minipage}
\end{center}	
	
\bigskip

\begin{abstract}
In this paper, we study the first eigenvalue of the Laplace--Beltrami operator on the Lawson minimal surfaces $\xi_{m,k}$ embedded in the unit three-sphere $\mathbb{S}^3$. Motivated by Yau's conjecture on the first eigenvalue of closed embedded minimal hypersurfaces in the sphere, we develop a symmetry-based approach to the equality $\lambda_1(\xi_{m,k})=2$ for the family of Lawson surfaces with $m$ and $k$ even. Our method exploits the discrete reflection symmetries intrinsic to Lawson's construction, together with the algebraic structure of the associated reflection group, Courant's nodal domain theorem, and the coordinate eigenfunctions arising from Takahashi's theorem. More precisely, we show that the equality $\lambda_1(\xi_{m,k})=2$ follows once a natural topological obstruction for invariant nodal sets in the fundamental patch is verified.
\end{abstract}
\section{Introduction}

The study of minimal surfaces is classically rooted in Plateau's problem, which asks whether a prescribed Jordan curve can be realized as the boundary of a surface of least area. Since the nineteenth century, this problem has played a central role in the development of geometric analysis, the calculus of variations, and the theory of surfaces with zero mean curvature. Motivated by physical experiments on equilibrium surfaces, Plateau's observations provided one of the earliest geometric manifestations of variational principles and helped initiate the rigorous mathematical study of minimal surfaces.

In his seminal 1970 paper, H.~B.~Lawson constructed an infinite family of closed embedded minimal surfaces in the unit sphere $\mathbb S^3$, now known as the Lawson surfaces $\xi_{m,k}$ \cite{Lawson1970}. His construction starts from a Plateau problem for a carefully chosen geodesic polygon in $\mathbb S^3$ \cite{castrillon2011algebricidad}. The corresponding spanning solution yields a minimal fundamental piece bounded by geodesic arcs, which is then analytically continued by repeated reflections across the totally geodesic two-spheres determined by its boundary edges. In this way, one obtains smooth compact minimal surfaces endowed with a large finite group of symmetries.

From a geometric viewpoint, the Lawson family is particularly rich: each surface is assembled from congruent minimal patches, its topology and embeddedness are encoded by the combinatorics of the underlying geodesic polygon, and its global structure is governed by explicit discrete symmetries. Related reflection constructions have also played an important role in the theory of minimal surfaces in $\mathbb S^3$; see, for example, the work of Karcher, Pinkall, and Sterling on new minimal surfaces in the three-sphere \cite{KarcherPinkallSterling1988}. These symmetry features make the Lawson surfaces especially natural candidates for a spectral analysis based on reflection groups and nodal geometry.

In spectral geometry, a fundamental objective is to understand the interplay between the spectrum of the Laplace--Beltrami operator and the global geometric invariants of the underlying Riemannian manifold. This investigation naturally bifurcates into two complementary directions. The first seeks to recover geometric and topological information from the spectrum. A classical example is Weyl's law \cite{Weyl1911}, which shows that the leading term in the asymptotic expansion of the eigenvalue counting function determines the volume of the manifold or domain. The second direction aims to estimate eigenvalues in terms of geometric quantities. In this vein, Cheeger's inequality \cite{Cheeger1970} provides a fundamental lower bound for the first positive eigenvalue in terms of the Cheeger isoperimetric constant.

For compact surfaces, the first eigenvalue is also closely related to conformal geometry, extremal metrics, and minimal immersions into spheres. Important contributions in this direction include the work of Li and Yau on conformal area and the first eigenvalue of compact surfaces \cite{LiYau1982}, the characterization of minimal immersions by first eigenfunctions due to Montiel and Ros \cite{MontielRos1986}, and the variational study of eigenvalue functionals developed by El Soufi and Ilias \cite{ElSoufiIlias2008}. These ideas continue to play a central role in modern eigenvalue optimization and in the geometry of extremal metrics induced by minimal immersions; see also \cite{Karpukhin2021}.

A particularly rich setting for such questions is the class of compact minimal hypersurfaces in the unit sphere. If $M^n\subset \mathbb S^{n+1}$ is minimally immersed, then by Takahashi's theorem \cite{Takahashi1966}, the coordinate functions restrict to eigenfunctions of the Laplace--Beltrami operator with eigenvalue $n$. This naturally leads to Yau's celebrated conjecture \cite{Yau1982} that, for every closed embedded minimal hypersurface $M^n\subset \mathbb S^{n+1}$, the first non-zero eigenvalue satisfies
\begin{equation}
\lambda_1(M)=n.
\end{equation}

Although the conjecture remains open in full generality, several important partial results are known. Choi and Wang proved the lower bound
\begin{equation}
\lambda_1(M)\ge \frac{n}{2}
\end{equation}
for closed embedded minimal hypersurfaces in the unit sphere \cite{ChoiWang1983}, and this was later improved to the strict inequality $\lambda_1(M)>n/2$ by Barros and Bessa \cite{BarrosBessa2004}. In a different direction, Tang and Yan verified Yau's conjecture for minimal isoparametric hypersurfaces \cite{TangYan2013}. For broader background on minimal surfaces in $\mathbb S^3$ and their role in global geometric problems, see also the survey of Brendle \cite{BrendleSurvey2013}.

In dimension two, the interplay between symmetry and the first eigenvalue becomes especially effective. Choe and Soret proved that $\lambda_1=2$ for a large class of highly symmetric embedded minimal surfaces in $\mathbb S^3$ arising from reflection symmetries \cite{ChoeSoret2009}. The Lawson surfaces $\xi_{m,k}$ form one of the most prominent families in this setting. Their explicit reflectional construction, together with the combinatorics of the fundamental geodesic polygon, suggests that the first eigenvalue problem on these surfaces can be approached through a careful analysis of the symmetry group and the nodal structure of first eigenfunctions.

In this paper, we develop a symmetry-based framework for the study of the first eigenvalue problem on the Lawson surfaces $\xi_{m,k}$, with particular emphasis on the case where $m$ and $k$ are even. Our approach is rooted in the explicit geometry of Lawson's reflectional construction and recasts the spectral problem in a concrete geometric--algebraic setting.

More precisely, we describe the algebraic structure of the reflection group $G_{\Gamma_{m,k}}$ together with the induced cell decomposition of $\mathbb S^3$, and we analyze how these symmetries constrain the nodal geometry of first eigenfunctions. In this way, the problem of proving the identity
\[
\lambda_1(\xi_{m,k})=2
\]
is reduced to the verification of a natural topological obstruction for invariant nodal sets in the fundamental patch $\mathcal{M}_{m,k}$. By means of Takahashi's theorem, which identifies the coordinate functions of $\mathbb R^4$ restricted to $\xi_{m,k}$ as eigenfunctions with eigenvalue $2$, our method translates the spectral gap problem into the study of the interaction between symmetry-invariant nodal sets and the boundary geometry of the fundamental domain. From this perspective, the first eigenvalue problem is reformulated as a question at the interface of spectral theory, discrete symmetry, and equivariant topology.

\section{Lawson surfaces}

Lawson's celebrated construction produces an infinite family of closed embedded minimal surfaces
\[
\xi_{m,k}\subset \mathbb S^3,
\qquad m,k\in \mathbb N,
\]
by solving a Plateau problem for a suitably chosen geodesic polygon and then extending the resulting minimal disk by repeated Schwarz reflections. This construction provides a particularly rich class of examples in which the global geometry is governed by an explicit finite reflection group. Since the aim of this paper is to exploit these symmetries in the study of the first Laplace eigenvalue, we now recall the geometric features of the Lawson surfaces that will be needed later.

\subsection{Construction of the $\xi_{m,k}$ surfaces}\label{cons}

To construct the Lawson surfaces $\xi_{m,k}$, with $m,k \in \mathbb{N}$, we begin with a fundamental surface $\mathcal{M}_{m,k}$ obtained as the solution to the Plateau problem for a specific closed geodesic polygon $\Gamma_{m,k} \subset \mathbb{S}^3$. This fundamental patch is then analytically continued via Schwarz reflection across its totally geodesic boundary arcs. The following classical principle is the basic tool in this construction.

\begin{theorem}[Schwarz Reflection Principle]
Let $M \subset \mathbb{S}^3$ be a $C^2$ minimal surface with boundary $\partial M$. If $\partial M$ contains a non-trivial geodesic arc $\gamma$, then $M$ admits a real-analytic extension across $\gamma$ via geodesic reflection. Furthermore, the extended surface is a minimal immersion.
\end{theorem}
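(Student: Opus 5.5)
The natural route is the classical reduction to the Euclidean Schwarz reflection principle. First we localize. Near an interior point $p$ of the geodesic arc $\gamma\subset\partial M$, let $\Sigma$ be a totally geodesic great $2$-sphere containing $\gamma$ and transverse to $M$ along $\gamma$. Let $\rho$ be the isometry of $\mathbb S^3$ given by geodesic reflection through the great circle containing $\gamma$. Concretely, if $\gamma$ lies in the $2$-plane $P\subset\mathbb R^4$, then $\rho$ is the restriction of the linear map that is $+\mathrm{id}$ on $P$ and $-\mathrm{id}$ on $P^\perp$. Then $\rho$ fixes $\gamma$ pointwise and reverses the normal directions to $\gamma$.

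The candidate extension is $\widetilde M = M\cup \rho(M)$. Since $\rho$ is an isometry of $\mathbb S^3$, the piece $\rho(M)$ is again a $C^2$ minimal surface, so the substance of the claim is regularity across $\gamma$.

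To prove regularity, pass to isothermal coordinates on a half-disc $D^+=\{z\in\mathbb C: |z|<1,\ \operatorname{Im}z\ge 0\}$, with $\gamma$ corresponding to the real segment. Write $X:D^+\to\mathbb S^3\subset\mathbb R^4$ for the conformal minimal parametrization. Choose Euclidean coordinates in which $P=\mathrm{span}(e_1,e_2)$. Then $X$ satisfies the harmonic map equation $\Delta X+|\nabla X|^2X=0$, together with two boundary conditions on $\operatorname{Im}z=0$:
\begin{itemize}
\item a Dirichlet condition $X_3=X_4=0$, since $\gamma\subset P$;
\item a Neumann condition $\partial_y X_1=\partial_y X_2=0$, which comes from conformality and from $\gamma$ being a geodesic.
\end{itemize}
For the Neumann condition, argue as follows. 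The tangent $\partial_x X$ lies in $P$, and the curvature vector of $\gamma$ in $\mathbb R^4$ is $-X$. Conformality gives $\partial_y X\perp\partial_x X$ and $\partial_y X\perp X$. Since $\{X,\partial_xX\}$ spans $P$ along $\gamma$, it follows that $\partial_yX\in P^\perp$.

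Now define $\widetilde X(\bar z)=\rho\,X(z)$, that is, reflect $X_1,X_2$ evenly and $X_3,X_4$ oddly across the real axis. The main obstacle is showing that $\widetilde X$ is a $C^2$ solution of the equation on the full disc. Because the equation is invariant under $\rho$ (which is linear and orthogonal), $\widetilde X$ solves it weakly on each half. The matched Dirichlet and Neumann data then show that $\widetilde X$ is $C^1$ across the axis and a weak solution on the full disc. Indeed, the boundary integrals in the weak formulation cancel: they vanish for the even components by the Neumann condition and for the odd components by the Dirichlet condition.

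Elliptic regularity for the harmonic map system then applies, since the nonlinearity $|\nabla X|^2X$ is smooth in $(X,\nabla X)$ and bootstrapping works. This gives smoothness. Real-analyticity follows from Morrey's theorem on analytic nonlinear elliptic systems.

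Finally, $\widetilde X$ remains conformal, since both $\rho$ and $z\mapsto\bar z$ preserve conformality. A conformal harmonic map into $\mathbb S^3$ is a branched minimal immersion. Branch points cannot appear on $\gamma$, because $|\partial_xX|\neq0$ there, as $\gamma$ is a regular arc. Hence the extended surface is a minimal immersion near $\gamma$. Covering $\gamma$ by such neighbourhoods gives the result.
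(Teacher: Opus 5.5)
The paper gives no proof of this statement. It quotes the principle as classical and uses it as a black box to generate $\xi_{m,k}$ from $\mathcal M_{m,k}$, so there is no argument in the paper to compare yours against step by step.

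Your proposal is the standard PDE proof, and it is correct.
\begin{itemize}
\item The two boundary conditions are right. The Dirichlet condition $X_3=X_4=0$ comes from $\gamma\subset P$. The Neumann condition $\partial_yX\in P^\perp$ follows from $\partial_yX\perp X$, $\partial_yX\perp\partial_xX$ and $P=\operatorname{span}\{X,\partial_xX\}$.
\item The even/odd extension $\widetilde X$ is $C^1$ across the axis and is a weak solution of $\Delta X+|\nabla X|^2X=0$ on the whole disc. In Green's identity, the terms $\varphi\,\partial_\nu u$ of the even components vanish by the Neumann condition. The terms $u\,\partial_\nu\varphi$ of the odd components vanish by the Dirichlet condition. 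The remaining terms cancel by symmetry.
\item $W^{2,p}$ and Schauder bootstrapping, followed by Morrey's analyticity theorem, complete the regularity.
\end{itemize}
What your route buys over the paper's citation is that it makes explicit what is actually needed: $C^1$ regularity of $M$ up to the open arc $\gamma$, and conformal coordinates up to that arc. The paper silently needs the same input when it applies the principle to the Plateau disc $\mathcal M_{m,k}$. There, regularity up to the open edges is itself a nontrivial boundary-regularity theorem.

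A few points need tightening.
\begin{itemize}
\item A conformal chart $D^+\to M$ with $X\in C^1(\overline{D^+})$, sending $\gamma$ to the real segment, is not automatic. One way to get it: apply Korn--Lichtenstein to a H\"older extension of the $C^1$ induced metric, then compose with a Kellogg--Warschawski conformal map onto the half-disc.
\item The nonvanishing of $\partial_xX$ on the axis comes from $X$ being an immersion up to the boundary. It does not follow merely from $\gamma$ being a regular arc as a set. Note that your Neumann condition holds even where $\partial_xX=0$, since then $\partial_yX=0$ by conformality.
\item The great sphere $\Sigma$ and the remark about the curvature vector of $\gamma$ are never used and should be dropped.
\item Your localization covers only interior points of $\gamma$. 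At the endpoints (the vertices of $\Gamma_{m,k}$ in the application), a single reflection does not produce a smooth extension, and the full dihedral group at the vertex is needed. This is consistent with ``extension across $\gamma$'', but worth stating.
\end{itemize}
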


This principle is the starting point of Lawson's construction. To describe it explicitly, we next introduce the relevant geodesic quadrilateral in $\mathbb{S}^3$ and the associated geodesic reflections.

\subsection{Geodesics and geodesic reflections in $\mathbb{S}^3$}

Geodesics in the round sphere $\mathbb{S}^n \subset \mathbb{R}^{n+1}$ are great circles, obtained as the intersection of $\mathbb{S}^n$ with $2$-dimensional linear subspaces of $\mathbb{R}^{n+1}$. Given $p \in \mathbb{S}^n$ and a unit tangent vector $v \in T_p\mathbb{S}^n$, the unique unit-speed geodesic $\gamma(t)$ such that $\gamma(0)=p$ and $\gamma'(0)=v$ is given by
\[
\gamma(t) = (\cos t)p + (\sin t)v,
\]
and its image is
\[
\gamma(\mathbb{R}) = \operatorname{span}\{p,v\} \cap \mathbb{S}^n.
\]

In the case of $\mathbb{S}^3 \subset \mathbb{R}^4$, let $\gamma$ be a great circle and let $\Pi_\gamma \subset \mathbb{R}^4$ denote the $2$-plane such that
\[
\gamma = \Pi_\gamma \cap \mathbb{S}^3.
\]

\begin{definition}
The geodesic reflection across $\gamma$ is the isometry
\[
r_\gamma: \mathbb{S}^3 \to \mathbb{S}^3
\]
induced by the orthogonal transformation of $\mathbb{R}^4$ that acts as the identity on $\Pi_\gamma$ and as $-\mathrm{Id}$ on the orthogonal complement $\Pi_\gamma^\perp$. In terms of the orthogonal projection $\operatorname{proj}_{\Pi_\gamma}$, this isometry is given by
\[
r_\gamma(x) = 2\operatorname{proj}_{\Pi_\gamma}(x) - x.
\]
\end{definition}
Thus, if $x = u_1 + u_2$ is the orthogonal decomposition with $u_1 \in \Pi_\gamma$ and $u_2 \in \Pi_\gamma^\perp$, then
\[
r_\gamma(x)=u_1-u_2.
\]
In particular, $r_\gamma$ is a global isometry of $\mathbb{S}^3$ fixing $\gamma$ pointwise. We now consider the two orthogonal great circles
\begin{align*}
C_1 &= \{x \in \mathbb{S}^3 : x_1 = x_2 = 0\}, \\
C_2 &= \{x \in \mathbb{S}^3 : x_3 = x_4 = 0\}.
\end{align*}
Let $\{e_i\}_{i=1}^4$ be the standard orthonormal basis of $\mathbb{R}^4$. For fixed integers $m,k \geq 1$, define the angular parameters
\[
\beta = \frac{\pi}{m+1},
\qquad
\theta = \frac{\pi}{k+1}.
\]
We choose vertices $P_1, P_2 \in C_1$ and $Q_1, Q_2 \in C_2$ by
\[
P_1 = e_4,
\qquad
Q_1 = e_2,
\qquad
P_2 = (\sin \beta)e_3 + (\cos \beta)e_4,
\qquad
Q_2 = (\sin \theta)e_1 + (\cos \theta)e_2.
\]
With respect to the standard round metric, these points satisfy
\[
d_{\mathbb{S}^3}(P_1,P_2)=\beta,
\qquad
d_{\mathbb{S}^3}(Q_1,Q_2)=\theta.
\]
Since $C_1$ and $C_2$ are orthogonal in $\mathbb{R}^4$, we have
\[
\langle P_i,Q_j\rangle = 0
\qquad
\text{for all } i,j\in\{1,2\}.
\]
Consequently, each pair $(P_i,Q_j)$ is joined by a unique minimizing geodesic arc of length $\pi/2$. We define the fundamental quadrilateral
$\Gamma_{m,k}\subset \mathbb{S}^3
$ as the closed piecewise geodesic Jordan curve connecting the vertices in the cyclic order $P_1 \to Q_1 \to P_2 \to Q_2 \to P_1.
$ Let $\mathcal{M}_{m,k}$ be the minimal disk spanning $\Gamma_{m,k}$ obtained as the solution to the Plateau problem. By construction, $\partial \mathcal{M}_{m,k} = \Gamma_{m,k}.
$

\subsection{Symmetry group and generation of $\xi_{m,k}$}

Let $G_{\Gamma_{m,k}} \subset \operatorname{Isom}(\mathbb{S}^3)
$
be the discrete group generated by the reflections $\{r_{\gamma_i}\}_{i=1}^4$ across the great circles containing the edges of $\Gamma_{m,k}$. By applying the Schwarz reflection principle iteratively, the fundamental patch $\mathcal{M}_{m,k}$ extends to a complete minimal surface $\xi_{m,k}$ defined by
\begin{equation}
\xi_{m,k} = \bigcup_{g \in G_{\Gamma_{m,k}}} g(\mathcal{M}_{m,k}).
\end{equation}
A direct computation of the group action shows that these reflections generate finite cyclic orbits for the vertices, satisfying the closure conditions
$P_{1+2(m+1)} = P_1,
$ and $
Q_{1+2(k+1)} = Q_1.
$
As proved by Lawson, the resulting surface is a compact embedded minimal surface of genus $g = mk$.
\begin{figure}[H]
    \centering
    \includegraphics[width=9cm,height=5cm]{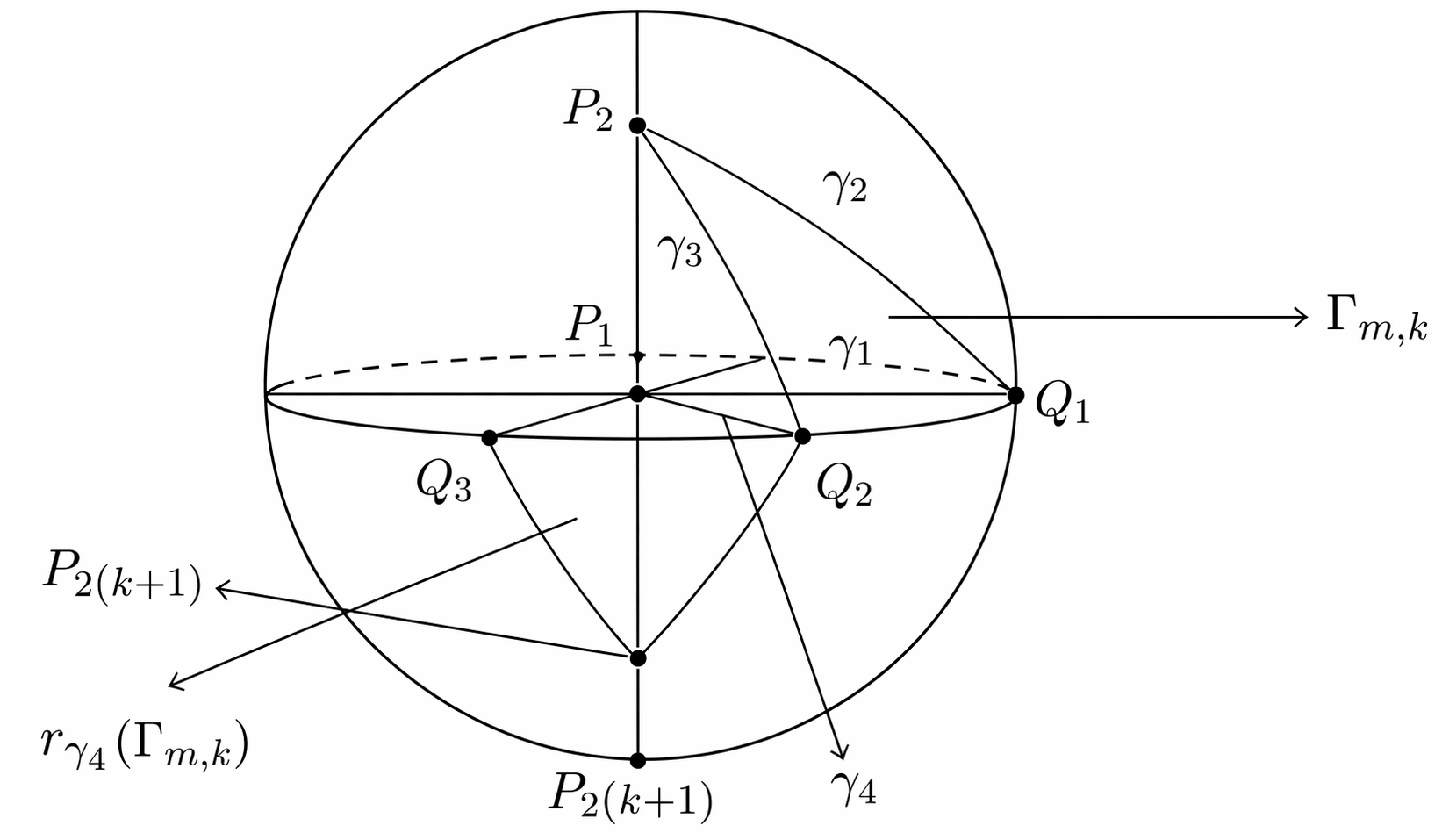}
    \caption{Generating all the points by means of reflections.}
\end{figure}


\section{The Reflection Group Associated with the Fundamental Quadrilateral}

In this section we describe explicitly the group $G_{\Gamma_{m,k}}$ generated by the geodesic reflections across the edges of the fundamental quadrilateral $\Gamma_{m,k}$. As explained in the previous section, each such reflection is an ambient isometry of $\mathbb S^3$ induced by an orthogonal transformation of $\mathbb R^4$ which fixes pointwise the $2$-plane containing the corresponding great circle and acts as $-\mathrm{Id}$ on its orthogonal complement. Our goal is to obtain an explicit algebraic description of this reflection group, which will play a central role in the symmetry arguments used later in the eigenvalue analysis. Let $\gamma_1,\dots,\gamma_4$ denote the great circles containing the four boundary geodesic arcs
$P_1Q_1,$ $ Q_1P_2,$ $ P_2Q_2,$ $ Q_2P_1,
$ respectively.

\begin{prop}\label{prop:reflection-generators}
Let
\[
A=
\begin{pmatrix}
-1&0\\
0&1
\end{pmatrix},
\qquad
B=
\begin{pmatrix}
\cos 2\theta & -\sin 2\theta\\
\sin 2\theta & \cos 2\theta
\end{pmatrix},
\qquad
C=
\begin{pmatrix}
\cos 2\beta & -\sin 2\beta\\
\sin 2\beta & \cos 2\beta
\end{pmatrix}.
\]
For $X,Y\in O(2)$, write
\[
(X,Y):=X\oplus Y \in O(4).
\]
Then the geodesic reflections across the boundary great circles of $\Gamma_{m,k}$ are given by
\[
r_{\gamma_1}=(A,A),\qquad
r_{\gamma_2}=(A,AC),\qquad
r_{\gamma_3}=(AB,AC),\qquad
r_{\gamma_4}=(AB,A).
\]
In particular,
\[
G_{\Gamma_{m,k}}
=
\langle r_{\gamma_1},r_{\gamma_2},r_{\gamma_3},r_{\gamma_4}\rangle
=
\langle (A,A),(B,I),(I,C)\rangle.
\]
\end{prop}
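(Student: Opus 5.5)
The plan is to compute each reflection directly from the definition $r_\gamma(x)=2\operatorname{proj}_{\Pi_\gamma}(x)-x$, using the fact that every $2$-plane $\Pi_{\gamma_i}$ splits compatibly with the decomposition $\mathbb R^4=\operatorname{span}\{e_1,e_2\}\oplus\operatorname{span}\{e_3,e_4\}$. Each edge joins a vertex $P_i\in C_1\subset\operatorname{span}\{e_3,e_4\}$ to a vertex $Q_j\in C_2\subset\operatorname{span}\{e_1,e_2\}$. Hence $\Pi_{\gamma}=\operatorname{span}\{Q_j\}\oplus\operatorname{span}\{P_i\}$, and $\Pi_\gamma^\perp=Q_j^{\perp}\oplus P_i^{\perp}$, where the orthogonal complements are taken inside the respective $2$-dimensional blocks. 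Consequently $r_\gamma=\rho_{Q_j}\oplus\rho_{P_i}$, where $\rho_v\in O(2)$ denotes the reflection of the plane fixing the line $\mathbb R v$ and negating its orthogonal line. This block-diagonal structure is exactly what the notation $(X,Y)$ records.

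It then remains to identify the four planar reflections. For $P_1=e_4$ and $Q_1=e_2$, the line is the second coordinate axis in each block, so $\rho_{e_2}=\rho_{e_4}=A$ and $r_{\gamma_1}=(A,A)$. For $P_2=(\sin\beta,\cos\beta)$ in the $(x_3,x_4)$ block, the line makes angle $\varphi=\pi/2-\beta$ with the $x_3$-axis. The standard formula for reflection across a line at angle $\varphi$ gives
\[
\rho_{P_2}=\begin{pmatrix}\cos2\varphi&\sin2\varphi\\ \sin2\varphi&-\cos2\varphi\end{pmatrix}=\begin{pmatrix}-\cos2\beta&\sin2\beta\\ \sin2\beta&\cos2\beta\end{pmatrix}.
\]
Multiplying out confirms that this matrix equals $AC$. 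The same computation with $\theta$ in place of $\beta$ gives $\rho_{Q_2}=AB$. Reading off the endpoints of each edge ($\gamma_2$: $Q_1,P_2$; $\gamma_3$: $P_2,Q_2$; $\gamma_4$: $Q_2,P_1$) then yields the four stated formulas.

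For the group identity I will use the componentwise product $(X,Y)(X',Y')=(XX',YY')$ together with $A^2=I$. One inclusion comes from
\[
r_{\gamma_1}r_{\gamma_2}=(I,C),\qquad r_{\gamma_1}r_{\gamma_4}=(B,I),
\]
which shows that $\langle r_{\gamma_i}\rangle\supseteq\langle (A,A),(B,I),(I,C)\rangle$. The reverse inclusion follows from
\[
r_{\gamma_2}=(A,A)(I,C),\qquad r_{\gamma_4}=(A,A)(B,I),\qquad r_{\gamma_3}=(A,A)(B,I)(I,C).
\]
The only step needing real care is the sign and orientation bookkeeping when matching $\rho_{P_2}$ with $AC$ rather than, say, $CA$ or $AC^{-1}$. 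I will settle this by checking directly that $AC$ fixes $(\sin\beta,\cos\beta)$ and negates $(\cos\beta,-\sin\beta)$. Everything else is routine matrix multiplication.
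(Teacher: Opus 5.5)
Your proposal is correct and follows essentially the same route as the paper. Both compute each $r_{\gamma_i}$ from $r_\gamma(x)=2\operatorname{proj}_{\Pi_\gamma}(x)-x$ using the splitting $\mathbb R^4=\operatorname{span}\{e_1,e_2\}\oplus\operatorname{span}\{e_3,e_4\}$; you make the block form $r_\gamma=\rho_{Q_j}\oplus\rho_{P_i}$ explicit up front, while the paper writes out coordinates. Both then get the group identity from $r_{\gamma_1}r_{\gamma_2}=(I,C)$ and $r_{\gamma_1}r_{\gamma_4}=(B,I)$, and your explicit reverse inclusion, writing $r_{\gamma_2},r_{\gamma_3},r_{\gamma_4}$ as products of $(A,A),(B,I),(I,C)$, supplies a step the paper leaves implicit.
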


\begin{proof}
We compute each reflection directly from the formula
\[
r_\gamma(x)=2\operatorname{proj}_{\Pi_\gamma}(x)-x.
\]
For $\gamma_1$, the corresponding $2$-plane is
\[
\Pi_{\gamma_1}=\operatorname{span}\{e_2,e_4\}.
\]
Thus, for $x=(x_1,x_2,x_3,x_4)\in\mathbb R^4$,
\[
r_{\gamma_1}(x)
=
2(\langle x,e_2\rangle e_2+\langle x,e_4\rangle e_4)-x
=
(-x_1,x_2,-x_3,x_4),
\]
which is exactly $(A,A)$. For $\gamma_2$, one has
\[
\Pi_{\gamma_2}
=
\operatorname{span}\{e_2,\sin\beta\,e_3+\cos\beta\,e_4\}.
\]
Hence
\begin{align*}
r_{\gamma_2}(x)
&=
2\Bigl(
\langle x,e_2\rangle e_2
+
\langle x,\sin\beta\,e_3+\cos\beta\,e_4\rangle
(\sin\beta\,e_3+\cos\beta\,e_4)
\Bigr)-x \\
&=
\bigl(
-x_1,\,
x_2,\,
-x_3\cos 2\beta + x_4\sin 2\beta,\,
x_3\sin 2\beta + x_4\cos 2\beta
\bigr),
\end{align*}
which coincides with $(A,AC)$. For $\gamma_3$, the corresponding plane is
\[
\Pi_{\gamma_3}
=
\operatorname{span}\{\sin\theta\,e_1+\cos\theta\,e_2,\,
\sin\beta\,e_3+\cos\beta\,e_4\}.
\]
Applying the same formula in each $2$-dimensional block gives
\[
r_{\gamma_3}(x)
=
\bigl(
-x_1\cos 2\theta + x_2\sin 2\theta,\,
x_1\sin 2\theta + x_2\cos 2\theta,\,
-x_3\cos 2\beta + x_4\sin 2\beta,\,
x_3\sin 2\beta + x_4\cos 2\beta
\bigr),
\]
hence
$r_{\gamma_3}=(AB,AC).
$ Finally, for $\gamma_4$ one has
\[
\Pi_{\gamma_4}
=
\operatorname{span}\{\sin\theta\,e_1+\cos\theta\,e_2,\,e_4\},
\]
and therefore
\[
r_{\gamma_4}(x)
=
\bigl(
-x_1\cos 2\theta + x_2\sin 2\theta,\,
x_1\sin 2\theta + x_2\cos 2\theta,\,
-x_3,\,
x_4
\bigr),
\]
which is $(AB,A)$. This proves the first set of formulas. Moreover,
\[
(A,A)(AB,A)=(B,I),
\qquad
(A,A)(A,AC)=(I,C),
\]
since $A^2=I$. Hence
\[
\langle r_{\gamma_1},r_{\gamma_2},r_{\gamma_3},r_{\gamma_4}\rangle
=
\langle (A,A),(B,I),(I,C)\rangle.
\]
\end{proof}
The preceding proposition shows that $G_{\Gamma_{m,k}}$ is generated by one involution together with two commuting rotations, acting on the two orthogonal coordinate planes of $\mathbb R^4$. The next result makes the algebraic structure of this group precise.

\begin{prop}\label{prop:group-structure}
Let
$
H:=\langle (B,I),(I,C)\rangle \subset G_{\Gamma_{m,k}}.$
Then
\[
H\cong \mathbb Z_{k+1}\times \mathbb Z_{m+1},
\]
$H$ is a normal subgroup of index $2$ in $G_{\Gamma_{m,k}}$, and conjugation by $(A,A)$ acts on $H$ by inversion on each factor. Consequently,
\[
G_{\Gamma_{m,k}}
\cong
(\mathbb Z_{k+1}\times \mathbb Z_{m+1})\rtimes \mathbb Z_2,
\]
where the nontrivial element of $\mathbb Z_2$ acts by
$(j,\ell)\longmapsto (-j,-\ell).
$
In particular,
\[
|G_{\Gamma_{m,k}}|=2(m+1)(k+1).
\]
Moreover, every element of $G_{\Gamma_{m,k}}$ admits a unique expression of the form
\[
(A,A)^i(B,I)^j(I,C)^\ell,
\qquad
i\in\{0,1\},\quad 0\le j\le k,\quad 0\le \ell\le m.
\]
\end{prop}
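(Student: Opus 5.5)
We work entirely with the description $G_{\Gamma_{m,k}}=\langle (A,A),(B,I),(I,C)\rangle$ from Proposition~\ref{prop:reflection-generators}, realized as block-diagonal matrices in $O(2)\oplus O(2)\subset O(4)$. Write $\sigma=(A,A)$, $b=(B,I)$, $c=(I,C)$.

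\emph{Step 1: the structure of $H$.} Since $b$ and $c$ act on complementary coordinate planes, they commute. The matrix $B$ is rotation by $2\theta=2\pi/(k+1)$, so $B$ has order exactly $k+1$ in $SO(2)$. Likewise $C$ is rotation by $2\pi/(m+1)$ and has order $m+1$. Hence $b^j c^\ell=(B^j,C^\ell)$. This element is the identity iff $B^j=I$ and $C^\ell=I$, that is, iff $(k+1)\mid j$ and $(m+1)\mid \ell$. Therefore the map
\[
\mathbb Z_{k+1}\times\mathbb Z_{m+1}\to H,\qquad (j,\ell)\mapsto b^jc^\ell,
\]
is a well-defined injective homomorphism. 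It is surjective by the definition of $H$, so it is an isomorphism and $|H|=(k+1)(m+1)$.

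\emph{Step 2: conjugation by $\sigma$.} Since $A$ is a reflection, $ARA^{-1}=R^{-1}$ for every $R\in SO(2)$. Therefore $\sigma b\sigma^{-1}=(ABA,I)=(B^{-1},I)=b^{-1}$, and similarly $\sigma c\sigma^{-1}=c^{-1}$. Together with $\sigma^2=I$, this shows that $H$ is normalized by all three generators, so $H\trianglelefteq G_{\Gamma_{m,k}}$. It also shows that $G_{\Gamma_{m,k}}=H\cup\sigma H$: every word in the generators can be rewritten by moving each $\sigma$ to the left, which inverts the elements of $H$ it passes, and then using $\sigma^2=I$.

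\emph{Step 3: $\sigma\notin H$.} This is the only genuinely non-formal point, and I would settle it with a determinant argument. Every element of $H$ has the form $(B^j,C^\ell)$, and each of its $2\times2$ blocks has determinant $+1$. By contrast, the first block $A$ of $\sigma$ has determinant $-1$. Hence $\sigma\notin H$. (The full $4\times4$ determinant equals $1$ for every element of $G_{\Gamma_{m,k}}$, so one must look at the individual blocks rather than the total determinant.) Consequently $[G_{\Gamma_{m,k}}:H]=2$ and $|G_{\Gamma_{m,k}}|=2(m+1)(k+1)$.

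\emph{Step 4: the semidirect product and normal form.} We have $\langle\sigma\rangle\cong\mathbb Z_2$ and $\langle\sigma\rangle\cap H=\{I\}$. Combined with Steps 2 and 3, this gives $G_{\Gamma_{m,k}}=H\rtimes\langle\sigma\rangle$, with $\sigma$ acting as $(j,\ell)\mapsto(-j,-\ell)$. For the unique expression, every element has the form $\sigma^i b^j c^\ell$ with $i\in\{0,1\}$, $0\le j\le k$ and $0\le\ell\le m$. There are exactly $2(k+1)(m+1)=|G_{\Gamma_{m,k}}|$ such words, so the expression is unique. Alternatively, uniqueness follows directly: the block determinant recovers $i$, and then Step 1 recovers $j$ and $\ell$.
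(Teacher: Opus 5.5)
Your proof is correct and follows essentially the same route as the paper. The orders of $B$ and $C$ give $H$, the relations $ABA=B^{-1}$ and $ACA=C^{-1}$ give normality and the inversion action, and your block-determinant test is exactly the paper's observation that $A^iB^j$ is a rotation or a reflection according to $i$. You are, if anything, more careful than the paper: you prove $(A,A)\notin H$ explicitly before invoking the semidirect product, and you verify injectivity of $(j,\ell)\mapsto (B,I)^j(I,C)^\ell$.
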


\begin{proof}
Since $B$ is the rotation of $\mathbb R^2$ through the angle
\[
2\theta=\frac{2\pi}{k+1},
\]
it follows that $B^{k+1}=I$ and that $B$ has order $k+1$. Similarly,
\[
C^{m+1}=I,
\]
and $C$ has order $m+1$. Therefore,
$H=\langle (B,I),(I,C)\rangle
\cong
\mathbb Z_{k+1}\times \mathbb Z_{m+1}.
$
Moreover, $(B,I)$ and $(I,C)$ commute, since they act on different $2$-dimensional factors of $\mathbb R^4$. Also, we have that the identities
\[
ABA=B^{-1},
\qquad
ACA=C^{-1}
\]
imply
\[
(A,A)(B,I)(A,A)=(B^{-1},I),
\qquad
(A,A)(I,C)(A,A)=(I,C^{-1}).
\]
Hence $H$ is normal in $G_{\Gamma_{m,k}}$, and $(A,A)$ acts on $H$ by inversion in each cyclic factor. Since
\[
(A,A)^2=I
\]
and
\[
G_{\Gamma_{m,k}}=\langle (A,A),H\rangle,
\]
it follows that
\[
G_{\Gamma_{m,k}}\cong H\rtimes \mathbb Z_2,
\]
with respect to this inversion action. It remains to prove the normal form and its uniqueness. By the defining relations, every word in the generators can be rewritten in the form
\[
(A,A)^i(B,I)^j(I,C)^\ell,
\qquad
i\in\{0,1\}.
\]
To prove uniqueness, suppose that
\[
(A,A)^i(B,I)^j(I,C)^\ell
=
(A,A)^{i'}(B,I)^{j'}(I,C)^{\ell'}.
\]
Comparing the two block components in $O(4)$, we obtain
\[
A^iB^j=A^{i'}B^{j'},
\qquad
A^iC^\ell=A^{i'}C^{\ell'}.
\]
The first identity determines $i$ and $j$, because the left-hand side is either a rotation, when $i=0$, or a reflection, when $i=1$. Once $i$ is fixed, the exponent $j$ is determined modulo $k+1$. The second identity then determines $\ell$ modulo $m+1$. Therefore the expression is unique when
\[
0\le j\le k,
\qquad
0\le \ell\le m.
\]
Finally, the formula for the order of the group follows immediately:
\[
|G_{\Gamma_{m,k}}|
=
2\,|H|
=
2(m+1)(k+1).
\]
\end{proof}


\section{The Laplace--Beltrami operator}
This section establishes the foundations for the main result. We recall the variational characterization of $\lambda_1$ and the topological constraints of Courant's nodal domain theorem and a basic symmetry property of the first eigenspace. We then specialize to minimal hypersurfaces in the sphere, where the coordinate functions provide explicit eigenfunctions that interact naturally with reflection symmetries.

Let $M$ be a closed Riemannian manifold, and let $\Delta$ denote the Laplace--Beltrami operator. We adopt the geometer's sign convention such that the eigenvalue equation is written as
\begin{equation}\label{courant_equation}
\Delta \phi + \lambda \phi = 0, \qquad \lambda \ge 0.
\end{equation}
The spectrum of $\Delta$ consists of a sequence of non-negative real numbers $0 = \lambda_0 < \lambda_1 \le \lambda_2 \le \dots \uparrow \infty$. The first non-zero eigenvalue, $\lambda_1(M)$, admits a variational characterization via the Rayleigh quotient:
\begin{equation}
\lambda_1(M) = \inf \left\{ \frac{\int_M |\nabla \phi|^2 \, dV}{\int_M \phi^2 \, dV} : \phi \in H^1(M) \setminus \{0\}, \int_M \phi \, dV = 0 \right\}.
\end{equation}
Functions achieving this infimum are referred to as first eigenfunctions. 
\begin{definition}
Let $f:M\to \mathbb R$ be a continuous function. The \emph{nodal set} of $f$ is the zero set
\[
\mathcal N(f):=f^{-1}(0).
\]
The connected components of $M\setminus \mathcal N(f)$ are called the \emph{nodal domains} of $f$.
\end{definition}

We shall use the following classical theorem of Courant.

\begin{theorem}[Courant nodal domain theorem]\label{courant_theorem}
Let
$0=\lambda_0<\lambda_1\le \lambda_2\le \cdots
$ be the eigenvalues of $\Delta$, listed with multiplicity, and let
$\{\phi_0,\phi_1,\phi_2,\dots\}
$
be an orthonormal basis of $L^2(M)$ such that $\phi_j$ is an eigenfunction associated with $\lambda_j$. Then the number of nodal domains of $\phi_k$ is at most $k+1$.
\end{theorem}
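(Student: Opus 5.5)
We need to prove Courant's nodal domain theorem: an eigenfunction $\phi_k$ has at most $k+1$ nodal domains. I will follow the classical variational argument, combining the min--max characterization of $\lambda_k$ with unique continuation.

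\textbf{Setup.} Suppose, for contradiction, that $\phi_k$ has at least $k+2$ nodal domains $\Omega_1,\dots,\Omega_{k+2}$. Since the eigenvalues are listed with multiplicity, I may first replace $k$ by the smallest index $k_0\le k$ with $\lambda_{k_0}=\lambda_k$. Then $\lambda_{k_0-1}<\lambda_{k_0}$, and proving the bound $k_0+1\le k+1$ for $\phi_k$ suffices. So assume $\lambda_{k-1}<\lambda_k$.

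\textbf{Test functions and trial space.} For each $i$, define $\psi_i=\phi_k\chi_{\Omega_i}$, extended by zero. Then $\psi_i\in H^1(M)$, because $\phi_k$ vanishes on $\partial\Omega_i$. This needs only that $\phi_k$ is continuous and that its restriction to a nodal domain, extended by zero, is $H^1$; the standard argument uses $(\phi_k-\epsilon)^+$ truncations and avoids any regularity of $\partial\Omega_i$. Integrating by parts against $\Delta\phi_k=-\lambda_k\phi_k$ on $\Omega_i$ (again justified by truncation) gives
\[
\int_M|\nabla\psi_i|^2=\lambda_k\int_M\psi_i^2 .
\]
The $\psi_i$ have disjoint supports, so they are orthogonal both in $L^2$ and in the Dirichlet form. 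Take the span of $\psi_1,\dots,\psi_{k+1}$. It is $(k+1)$-dimensional, so it contains a nonzero $f=\sum c_i\psi_i$ orthogonal to $\phi_0,\dots,\phi_{k-1}$ ($k$ linear conditions). By orthogonality of supports, the Rayleigh quotient of $f$ equals $\lambda_k$. The min--max principle gives $\lambda_k\le R(f)$ for $f\perp\phi_0,\dots,\phi_{k-1}$, with equality only if $f$ is an eigenfunction with eigenvalue $\lambda_k$. Hence $\Delta f+\lambda_k f=0$ weakly, and $f$ is smooth by elliptic regularity.

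\textbf{Contradiction via unique continuation.} The function $f$ vanishes identically on the nonempty open set $\Omega_{k+2}$. By the unique continuation theorem for second-order elliptic equations (Aronszajn), $f\equiv0$ on the connected manifold $M$. This contradicts $f\neq0$. If $M$ is not connected, the argument is applied componentwise, but here the surfaces are connected. Hence $\phi_k$ has at most $k+1$ nodal domains.

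\textbf{Main obstacle.} The delicate step is the $H^1$ membership of $\psi_i$ and the identity $\int|\nabla\psi_i|^2=\lambda_k\int\psi_i^2$, since nodal domains may have irregular boundaries. I would handle it by working with $\phi_\epsilon=(\phi_k-\epsilon)^+\chi_{\Omega_i}$ on positive domains (and symmetrically on negative ones), which are compactly supported in $\Omega_i$, and then letting $\epsilon\to0$ by dominated convergence. The equality case of min--max is standard: expand $f$ in the eigenbasis and observe that any component with $\lambda_j>\lambda_k$ would make the quotient strictly larger.
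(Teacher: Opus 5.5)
The paper does not prove this statement at all; it quotes Courant's theorem as a classical result. Your argument is the standard Courant--Hilbert proof, and it is correct. You restrict $\phi_k$ to $k+1$ nodal domains and choose a nonzero combination orthogonal to $\phi_0,\dots,\phi_{k-1}$. The equality case of min--max then makes it an eigenfunction, and unique continuation applied on the $(k+2)$-nd domain gives the contradiction. Your $(\phi_k-\epsilon)^+$ truncation correctly handles $H^1$ membership and the energy identity on irregular nodal domains.

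Two small remarks:
\begin{itemize}
\item The reduction to $\lambda_{k-1}<\lambda_k$ is harmless but not needed. Expanding $f=\sum_{j\ge k}a_j\phi_j$ already shows that $R(f)=\lambda_k$ forces $f$ into the $\lambda_k$-eigenspace.
\item Drop your aside about disconnected $M$ rather than patching it. The hypothesis $0=\lambda_0<\lambda_1$ already forces $M$ to be connected, since the multiplicity of the eigenvalue $0$ equals the number of components. On a disconnected manifold the stated bound is actually false: a constant $\phi_0$ has as many nodal domains as $M$ has components, which exceeds $0+1$. So no componentwise argument could rescue it.
\end{itemize}
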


As a consequence, one obtains the following standard property of first eigenfunctions.

\begin{cor}\label{courant_first_corollary}
Let $\phi_1$ be an eigenfunction associated with $\lambda_1(M)$. Then $\phi_1$ has exactly two nodal domains.
\end{cor}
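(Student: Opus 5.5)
We need to prove that a first eigenfunction $\phi_1$ has exactly two nodal domains. The argument has two halves: an upper bound from Theorem~\ref{courant_theorem}, and a lower bound from orthogonality to constants.

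\textbf{Upper bound.} Complete $\phi_1$ to an orthonormal basis of eigenfunctions $\{\phi_0,\phi_1,\phi_2,\dots\}$ of $L^2(M)$. Here $\phi_0$ is the constant function $|M|^{-1/2}$, since $\lambda_0=0$ is simple on a closed connected manifold. We may take $\phi_1$, normalized, to be the basis element with index $1$: if $\lambda_1$ has multiplicity greater than one, choose an orthonormal basis of the $\lambda_1$-eigenspace whose first vector is $\phi_1/\|\phi_1\|_{L^2}$. Since normalizing does not change the nodal set, Theorem~\ref{courant_theorem} with $k=1$ gives that $\phi_1$ has at most two nodal domains.

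\textbf{Lower bound.} Because $\lambda_1\neq\lambda_0$, the function $\phi_1$ is $L^2$-orthogonal to constants, that is,
\[
\int_M\phi_1\,dV=0.
\]
Also $\phi_1\not\equiv0$. By unique continuation (or simply analyticity of eigenfunctions in the smooth setting), the set $\{\phi_1\neq0\}$ is open and nonempty. The integral identity then forces $\phi_1$ to take both strictly positive and strictly negative values. Any set on which $\phi_1>0$ is connected, and lies in $M\setminus\mathcal N(\phi_1)$, is contained in a nodal domain on which $\phi_1>0$ throughout; indeed, $\phi_1$ has constant sign on each nodal domain, since a nodal domain is connected and $\phi_1$ is continuous and nonvanishing there. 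It follows that the points where $\phi_1>0$ and the points where $\phi_1<0$ lie in different nodal domains, so there are at least two.

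\textbf{Conclusion and main subtlety.} Combining the two bounds, $\phi_1$ has exactly two nodal domains, one on which $\phi_1>0$ and one on which $\phi_1<0$. The only delicate step is the indexing issue when $\lambda_1$ is multiple: Courant's theorem as stated applies to the $k$-th element of an orthonormal eigenbasis, so it must be applied to a basis chosen so that the given eigenfunction sits at index $1$. Done as above, this is legitimate. A more robust alternative is to use the standard form of Courant's theorem, which bounds the number of nodal domains of any eigenfunction with eigenvalue $\lambda_k$ by $k+1$, where $k$ is the first index at which $\lambda_k$ appears in the list; this gives the bound $2$ directly.
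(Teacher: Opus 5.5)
Your proposal is correct and follows the same route as the paper: the upper bound comes from Theorem~\ref{courant_theorem}, and the lower bound from $\int_M\phi_1\,dV=0$ forcing both signs. Your placement of $\phi_1/\|\phi_1\|_{L^2}$ at index $1$ of the eigenbasis makes explicit a point the paper leaves implicit when $\lambda_1$ is multiple; one small fix is that unique continuation is not needed (continuity alone shows a nonnegative function with zero integral vanishes identically), and eigenfunctions of a merely smooth metric need not be real-analytic, so drop that parenthetical.
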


\begin{proof}
By Theorem~\ref{courant_theorem}, $\phi_1$ has at most two nodal domains. On the other hand, since $\phi_1$ is orthogonal to the constants, we have
\[
\int_M \phi_1\, dV =0.
\]
Since $\phi_1\not\equiv 0$, it cannot be nonnegative or nonpositive on all of $M$. Hence $\phi_1$ takes both positive and negative values, and therefore it has at least two nodal domains. This proves the claim.
\end{proof}

We shall also need the following symmetry property of the first eigenspace.

\begin{lemma}\label{first_eigenspace_invariance}
Let $\Sigma$ be a closed Riemannian manifold, and let $G$ be a group of isometries of $\Sigma$. Then the eigenspace associated with $\lambda_1(\Sigma)$ is invariant under the action of $G$. In particular, if $\lambda_1(\Sigma)$ is simple and $\phi$ is a first eigenfunction, then for every $g\in G$ there exists $\varepsilon_g\in\{\pm1\}$ such that
\[
\phi\circ g=\varepsilon_g\,\phi.
\]
\end{lemma}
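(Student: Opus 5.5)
The plan is to show that $\Delta$ commutes with the pull-back action of $G$, and then use the spectral theorem. I first check that every isometry $g$ of $\Sigma$ commutes with the Laplace--Beltrami operator: $\Delta(\phi\circ g)=(\Delta\phi)\circ g$ for all smooth $\phi$. This holds because $\Delta$ is defined intrinsically from the metric, and $g$ preserves the metric. Concretely, $g$ preserves the Dirichlet energy $\int_\Sigma|\nabla\phi|^2\,dV$ and the $L^2$ inner product, since $g^*dV=dV$ and $|\nabla(\phi\circ g)|=|\nabla\phi|\circ g$. The identity then follows from the weak formulation $\int\langle\nabla\phi,\nabla\psi\rangle=-\int \psi\,\Delta\phi$ by substituting $\psi\circ g$ and changing variables.

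Given this, if $\Delta\phi+\lambda_1\phi=0$, then
\[
\Delta(\phi\circ g)+\lambda_1(\phi\circ g)=\bigl(\Delta\phi+\lambda_1\phi\bigr)\circ g=0,
\]
so $\phi\circ g$ lies in the $\lambda_1$-eigenspace $E_1$. Hence $E_1$ is $G$-invariant.

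For the second claim, assume $\dim E_1=1$. Then $\phi\circ g=c_g\phi$ for some real scalar $c_g$. Since $g$ is measure preserving,
\[
\int_\Sigma(\phi\circ g)^2\,dV=\int_\Sigma\phi^2\,dV\neq0,
\]
which forces $c_g^2=1$, so $\varepsilon_g:=c_g\in\{\pm1\}$.

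The main point needing care is the commutation of $\Delta$ with isometries. Everything else is linear algebra on a finite-dimensional space together with a change of variables. No earlier result of the paper is needed beyond the definitions.
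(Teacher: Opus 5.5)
Your proposal is correct and follows the paper's proof: commutation of $\Delta$ with pull-back by isometries shows that the $\lambda_1$-eigenspace is $G$-invariant, and in the simple case preservation of the $L^2$ norm forces $c_g^2=1$. The only difference is that you justify the commutation identity through the weak formulation and a change of variables, whereas the paper simply asserts it.
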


\begin{proof}
Let $g\in G$. Since $g$ is an isometry of $\Sigma$, pull-back by $g$ commutes with the Laplace--Beltrami operator, namely
\[
\Delta(\phi\circ g)=(\Delta\phi)\circ g
\]
for every smooth function $\phi$ on $\Sigma$. Therefore, if $\phi$ satisfies
\[
\Delta\phi+\lambda_1\phi=0,
\]
then
\[
\Delta(\phi\circ g)+\lambda_1(\phi\circ g)=0.
\]
Thus $\phi\circ g$ is again an eigenfunction associated with $\lambda_1(\Sigma)$, and the first eigenspace is invariant under the action of $G$. If $\lambda_1(\Sigma)$ is simple, then its eigenspace is one-dimensional. Hence for every $g\in G$ there exists $c_g\in\mathbb R$ such that
\[
\phi\circ g=c_g\,\phi.
\]
Since $g$ is an isometry, it preserves the $L^2$-norm, and therefore
\[
\|\phi\circ g\|_{L^2(\Sigma)}=\|\phi\|_{L^2(\Sigma)}.
\]
It follows that $|c_g|=1$, so $c_g\in\{\pm1\}$. This proves the result.
\end{proof}

\subsection{Geometric preliminaries and coordinate eigenfunctions}

We now recall the geometric fact that underlies the symmetry argument in the minimal setting. For a closed embedded minimal hypersurface in the sphere, the coordinate functions are eigenfunctions with eigenvalue equal to the dimension.

Let
$X:M\to \mathbb S^{n+1}\subset \mathbb 
R^{n+2}
$
be the position vector of a closed embedded minimal hypersurface. Then
\[
\Delta_M X+nX=0.
\]
Equivalently, each Euclidean coordinate function restricted to $M$ is an eigenfunction of the Laplace--Beltrami operator with eigenvalue $n$. Now fix a vector $v\in \mathbb S^{n+1}$, and consider the associated equatorial sphere
\[
\mathbb S^n(v):=\{x\in \mathbb S^{n+1}:\langle x,v\rangle=0\}.
\]
Define the function
$f_v:M\to \mathbb R,
$ given by $
f_v=\langle X,v\rangle.
$
Since $v$ is constant in $\mathbb R^{n+2}$, taking the Euclidean inner product of the identity
\[
\Delta_M X+nX=0
\]
with $v$ yields
\[
\Delta_M f_v+n f_v=0.
\]
Thus $f_v$ is an eigenfunction of $\Delta_M$ associated with the eigenvalue $n$. Moreover, by definition,
\[
f_v>0 \quad \text{on } M\cap H_+(v),
\qquad
f_v<0 \quad \text{on } M\cap H_-(v),
\qquad
f_v=0 \quad \text{on } M\cap \mathbb S^n(v),
\]
where
\[
H_\pm(v):=\{x\in \mathbb S^{n+1}:\pm\langle x,v\rangle>0\}.
\]
These coordinate eigenfunctions will play a key role in the reflection argument below.

\begin{figure}[H]
    \centering
    \includegraphics[width=8cm]{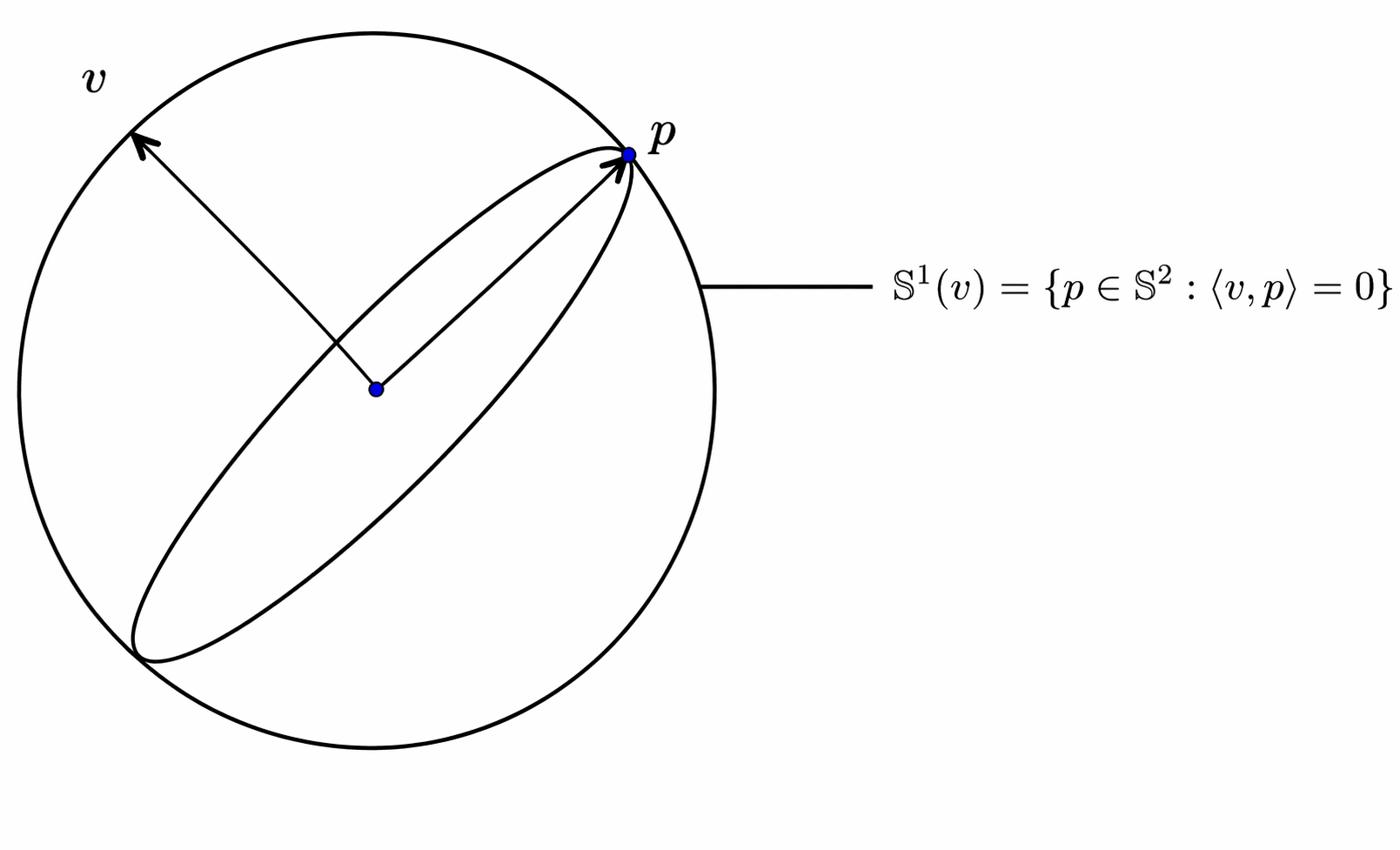} 
    \caption{An equatorial plane dividing the sphere into two symmetric hemispheres.}
    \label{fig:maximal_sphere}
\end{figure}

We next use these coordinate eigenfunctions to show that, under the strict inequality $\lambda_1(M)<n$, the first eigenspace is invariant under the reflection group in the strongest possible sense.

\begin{lemma}\label{lem:reflection_invariance_first_eigenfunction}
Let $M \subset \mathbb{S}^{n+1}$ be a closed, embedded minimal hypersurface, and let $G$ be a group generated by reflections across equatorial spheres
\[
    \mathbb{S}^n(v) = \{x \in \mathbb{S}^{n+1} : \langle x, v \rangle = 0\}, \qquad v \in \mathbb{S}^{n+1},
\]
such that $M$ is invariant under $G$. Assume, moreover, that for every generating reflection $\sigma \in G$ across $\mathbb{S}^n(v)$, the intersection $M \cap \mathbb{S}^n(v)$ separates $M$ into exactly two connected components that are exchanged by $\sigma$. If $\lambda_1(M) < n$, then every first eigenfunction on $M$ is invariant under the action of $G$.
\end{lemma}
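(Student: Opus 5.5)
Since $G$ is generated by reflections, it suffices to show $\phi\circ\sigma=\phi$ for every first eigenfunction $\phi$ and every generating reflection $\sigma$ across an equatorial sphere $\mathbb S^n(v)$. Products of such reflections then also leave $\phi$ fixed.

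Fix $\sigma$ and $\phi$. By Lemma~\ref{first_eigenspace_invariance}, $\phi\circ\sigma$ is again a first eigenfunction. Split
\[
\phi=\phi^++\phi^-,\qquad \phi^\pm:=\tfrac12\bigl(\phi\pm\phi\circ\sigma\bigr),
\]
with $\phi^+$ even and $\phi^-$ odd under $\sigma$. Both lie in the $\lambda_1$-eigenspace. It therefore suffices to show that no nonzero $\sigma$-odd first eigenfunction exists: then $\phi^-\equiv 0$ and $\phi\circ\sigma=\phi$.

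Suppose $\psi:=\phi^-\not\equiv 0$ with $\psi\circ\sigma=-\psi$. Since $\sigma$ fixes $\mathbb S^n(v)$ pointwise, $\psi$ vanishes on $M\cap\mathbb S^n(v)$. Let $M_+ = M\cap H_+(v)$ and $M_-=M\cap H_-(v)$ be the two components of $M\setminus\mathbb S^n(v)$, which by hypothesis are connected and exchanged by $\sigma$. Then $\psi|_{M_+}$ is a Dirichlet eigenfunction on $M_+$ with eigenvalue $\lambda_1(M)$, so
\[
\lambda_1(M)\ \ge\ \lambda_1^D(M_+).
\]

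Now compare with the coordinate function $f_v$. It satisfies $\Delta_M f_v+nf_v=0$, vanishes on $\partial M_+ = M\cap\mathbb S^n(v)$, and is strictly positive on the connected domain $M_+$. A positive Dirichlet eigenfunction on a connected domain must be the first one, so
\[
\lambda_1^D(M_+)=n.
\]
To justify this, note that the first Dirichlet eigenfunction $u>0$ satisfies
\[
(\lambda_1^D-n)\int_{M_+}uf_v=0
\]
by Green's identity, and $\int_{M_+}uf_v>0$. Combining the two displays gives $\lambda_1(M)\ge n$, contradicting $\lambda_1(M)<n$. Hence $\psi\equiv0$.

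The expected obstacle is regularity. We need $M\cap\mathbb S^n(v)$ to be a reasonable boundary, so that $\psi|_{M_+}\in H^1_0(M_+)$ and Green's identity applies. The fact that $M_\pm$ are exchanged by $\sigma$ handles the symmetry, but the boundary could be singular where $M$ is tangent to $\mathbb S^n(v)$. I would sidestep this as follows. Extending $\psi|_{M_+}$ by zero already gives a function in $H^1_0(M_+)$, because $\psi$ is smooth on $M$ and vanishes on the zero set. The inequality $\lambda_1^D(M_+)\le\lambda_1(M)$ then follows from the Rayleigh quotient alone. For the identity $\lambda_1^D(M_+)=n$, I would use the standard criterion: if a positive $H^1_0$ solution exists, its eigenvalue is the bottom of the Dirichlet spectrum. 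This is proved via the Picone/Barta identity applied to test functions in $C_c^\infty(M_+)$ and a density argument, which avoids any boundary regularity.
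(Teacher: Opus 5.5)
Your proof is correct, but it takes a different route from the paper's.

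\textbf{The paper's route.} The paper also forms the odd part $\psi=\phi-\phi\circ\sigma$, but then works entirely on the closed surface. Since $\psi$ is a first eigenfunction, Courant's theorem gives it exactly two nodal domains. The two-component hypothesis places these in $M\cap H_+(v)$ and $M\cap H_-(v)$, so $\psi f_v$ has a fixed sign. This contradicts the $L^2$-orthogonality on closed $M$ of eigenfunctions with the distinct eigenvalues $\lambda_1(M)<n$.

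\textbf{Your route.} You replace Courant's theorem by a Dirichlet comparison on the half $M_+$. The odd part lies in $H^1_0(M_+)$ with Rayleigh quotient $\lambda_1(M)$. Say explicitly that it is nonzero on $M_+$; this follows from oddness. The positive solution $f_v$ then forces $\lambda_1^D(M_+)\ge n$, which is all you actually need.

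\textbf{What each buys.} The paper never has to do analysis on the possibly irregular open set $M\cap H_+(v)$, since orthogonality on a closed manifold is automatic. In exchange, it relies on $\psi$ being a first eigenfunction (for Courant) and on the separation hypothesis to locate the nodal domains. Your argument costs some $H^1_0$ bookkeeping but proves more: every $\sigma$-odd eigenfunction on $M$, first or not, has eigenvalue at least $n$. It uses only $f_v>0$ on $M\cap H_+(v)$; with the Picone/Barta identity you need neither Courant's theorem nor connectedness of $M_+$.

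\textbf{On the regularity worry.} The obstacle you anticipated is milder than you feared, and Picone is not even necessary. Both $u$ and $f_v$ restricted to $M_+$ (extended by zero) lie in $H^1_0(M_+)$. Each satisfies its equation weakly against all of $H^1_0(M_+)$: for $u$ by the Euler--Lagrange equation, and for $f_v$ by density of $C_c^\infty(M_+)$. Hence $\int_{M_+}\nabla u\cdot\nabla f_v$ equals both $\lambda_1^D\int_{M_+}uf_v$ and $n\int_{M_+}uf_v$, with no boundary terms at all.
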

\begin{proof}
It is enough to prove the claim for each generating reflection. Let $\sigma$ be the reflection across $\mathbb S^n(v)$, and let $\phi$ be an eigenfunction associated with $\lambda_1(M)$, so that
\[
\Delta_M\phi+\lambda_1(M)\phi=0.
\]
Define
\[
\psi:=\phi-\phi\circ \sigma.
\]
Since $\sigma$ is an isometry of $M$, pull-back by $\sigma$ commutes with the Laplace--Beltrami operator. Hence
\[
\Delta_M\psi+\lambda_1(M)\psi=0.
\]
Therefore either $\psi\equiv 0$, in which case $\phi$ is invariant under $\sigma$, or else $\psi$ is a nontrivial first eigenfunction. Assume for contradiction that $\psi\not\equiv 0$. Since $\sigma$ fixes every point of $M\cap \mathbb S^n(v)$, one has
\[
\psi=0 \qquad \text{on } M\cap \mathbb S^n(v).
\]
Moreover,
\[
\psi\circ \sigma=\phi\circ \sigma-\phi=-\psi,
\]
so $\psi$ is antisymmetric with respect to $\sigma$. Because $\psi$ is a first eigenfunction, Corollary~\ref{courant_first_corollary} implies that $\psi$ has exactly two nodal domains. On the other hand,
\[
M\cap \mathbb S^n(v)\subset \mathcal N(\psi),
\]
so every nodal domain of $\psi$ is contained in a connected component of
\[
M\setminus \bigl(M\cap \mathbb S^n(v)\bigr).
\]
By assumption, this complement has exactly two connected components, namely
\[
M\cap H_+(v)
\qquad\text{and}\qquad
M\cap H_-(v),
\]
where
\[
H_\pm(v):=\{x\in \mathbb S^{n+1}:\pm\langle x,v\rangle>0\}.
\]
Since $\psi$ has exactly two nodal domains, they must coincide with these two connected components. Replacing $\psi$ by $-\psi$ if necessary, we may therefore assume that
\[
\psi>0 \quad \text{on } M\cap H_+(v),
\qquad
\psi<0 \quad \text{on } M\cap H_-(v).
\]
Now consider the coordinate function
\[
f_v(x):=\langle x,v\rangle,
\qquad x\in M.
\]
By the discussion above, $f_v$ satisfies
\[
\Delta_M f_v+n f_v=0.
\]
Thus $f_v$ is an eigenfunction associated with the eigenvalue $n$. Moreover,
\[
f_v>0 \quad \text{on } M\cap H_+(v),
\qquad
f_v<0 \quad \text{on } M\cap H_-(v).
\]
Hence
\[
\psi\,f_v>0
\qquad
\text{on } \bigl(M\cap H_+(v)\bigr)\cup\bigl(M\cap H_-(v)\bigr).
\]
Since both sets are nonempty open subsets of $M$, they have positive measure, and therefore
\[
\int_M \psi\,f_v\,dV>0.
\]

On the other hand, $\psi$ and $f_v$ are eigenfunctions associated with the distinct eigenvalues $\lambda_1(M)$ and $n$. Since $\lambda_1(M)<n$, they are $L^2$-orthogonal, so
\[
\int_M \psi\,f_v\,dV=0,
\]
which is a contradiction. Thus $\psi\equiv 0$, and hence $\phi\circ \sigma=\phi.
$ Since this argument applies to every generating reflection, it follows that $\phi$ is invariant under the whole group $G$.
\end{proof}
\section{The first eigenvalue of $\xi_{m,k}$ for $m$ and $k$ even}

\subsection{Generalized quadrants}

We now describe the spherical cell decomposition of $\mathbb S^3$ naturally associated with the Lawson construction. This decomposition will allow us to localize the fundamental patch and to control the action of the reflection group on neighboring cells. For the fixed integers $m,k\ge 1$, let
$P_i\in C_1,$ for $ i=1,\dots,2m+2,
$
and $Q_j\in C_2,$ for $ j=1,\dots,2k+2,$
be the equally spaced vertices on the orthogonal great circles $C_1$ and $C_2$, respectively, satisfying
\begin{align*}
d(P_i,P_{i+1})&=\beta=\frac{\pi}{m+1},\\
d(Q_j,Q_{j+1})&=\theta=\frac{\pi}{k+1},
\end{align*}
with indices understood modulo $2m+2$ and $2k+2$. In terms of the standard orthonormal basis $\{e_1,e_2,e_3,e_4\}$ of $\mathbb R^4$, these points are given by
\begin{equation}\label{puntos}
\begin{aligned}
P_i&=\sin\bigl((i-1)\beta\bigr)e_3+\cos\bigl((i-1)\beta\bigr)e_4,\\
Q_j&=\sin\bigl((j-1)\theta\bigr)e_1+\cos\bigl((j-1)\theta\bigr)e_2.
\end{aligned}
\end{equation}
Thus the sets $\{P_i\}$ and $\{Q_j\}$ determine regular subdivisions of $C_1$ and $C_2$ into congruent geodesic arcs.

\begin{definition}\label{cuadrantes}
For $1\le i\le 2m+2$ and $1\le j\le 2k+2$, let $C_{i,j}\subset \mathbb S^3$ denote the spherical quadrilateral bounded by the minimizing geodesic arcs joining
$P_i\to Q_j\to P_{i+1}\to Q_{j+1}\to P_i,
$ where the indices are understood modulo $2m+2$ and $2k+2$, respectively. Equivalently, $C_{i,j}$ is the radial projection to $\mathbb S^3$ of the Euclidean convex hull of the four vertices
\[
P_i,\ Q_j,\ P_{i+1},\ Q_{j+1}.
\]
\end{definition}

In particular, the original fundamental spherical cell is $C_{1,1}$. The next lemmas describe how the reflection group acts on these cells.

\begin{lemma}\label{reflexiones c1,1}
If $g\in G_{\Gamma_{m,k}}$ satisfies
$g(C_{1,1})=C_{1,1},
$ then $g=\mathrm{id}$.
\end{lemma}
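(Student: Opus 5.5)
The plan is to work with the normal form of Proposition~\ref{prop:group-structure} and reduce the statement to an elementary fact about the dihedral actions on the two coordinate planes. Write $g=(A,A)^i(B,I)^j(I,C)^\ell=(A^iB^j,\,A^iC^\ell)$ with $i\in\{0,1\}$, $0\le j\le k$, $0\le \ell\le m$. The goal is to show $i=j=\ell=0$.

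\textbf{Step 1: $g$ permutes the vertices.} By Definition~\ref{cuadrantes}, $C_{1,1}$ is the radial projection of the Euclidean convex hull $K$ of $P_1,Q_1,P_2,Q_2$. Since $g$ is linear and commutes with radial projection, $g(C_{1,1})=C_{1,1}$ gives $g(\mathbb R_{>0}K)=\mathbb R_{>0}K$. The four vertices are linearly independent, because $P_1,P_2$ span the $(e_3,e_4)$-plane, $Q_1,Q_2$ span the $(e_1,e_2)$-plane, and $\beta,\theta\in(0,\pi)$. Hence the cone $\mathbb R_{>0}K$ is simplicial, and its extremal rays are exactly the rays through the four vertices. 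A linear automorphism of the cone permutes these rays. Since $g$ is orthogonal and the vertices are unit vectors, $g$ permutes $\{P_1,P_2,Q_1,Q_2\}$. Moreover $g$ is block diagonal, so it preserves $C_1$ and $C_2$. Therefore $g$ permutes $\{Q_1,Q_2\}$ and, separately, $\{P_1,P_2\}$.

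\textbf{Step 2: the first block.} Consider $X=A^iB^j\in O(2)$ acting on the $(x_1,x_2)$-plane, where $Q_1=e_2$ and $Q_2$ makes angle $\theta=\pi/(k+1)\le\pi/2$ with $e_2$.

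\emph{If $X$ fixes both $Q_1$ and $Q_2$:} these vectors are independent, so $X=I$. This forces $i=0$, since $A^iB^j$ is a reflection when $i=1$, and then $j=0$.

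\emph{If $X$ swaps $Q_1$ and $Q_2$:} we rule this out in two cases.
\begin{itemize}
\item If $X$ is a rotation, it must be the rotation by $\pi$, because $X^2$ fixes two independent vectors. But then $X(e_2)=-e_2\ne Q_2$ since $\theta<\pi$.
\item If $X=AB^j$ is a reflection, its axis is the bisector of $Q_1$ and $Q_2$, at angle $\theta/2$ from $e_2$ mod $\pi$. On the other hand, $A$ fixes the $e_2$-axis and $B$ rotates by $2\theta$, so the axis of $AB^j$ lies at angle $j\theta$ from $e_2$ mod $\pi$. Equality gives $(2j-1)\theta/2\in\pi\mathbb Z$, that is, $(2j-1)/(2(k+1))\in\mathbb Z$. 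This is impossible because $2j-1$ is odd.
\end{itemize}
Hence $i=0$ and $j=0$.

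\textbf{Step 3: the second block.} With $i=0$ the second component is the rotation $C^\ell$. If it swaps $P_1$ and $P_2$, it is rotation by $\pi$, which is impossible since $\beta<\pi$. So it fixes two independent vectors, hence $C^\ell=I$ and $\ell=0$. This gives $g=\mathrm{id}$.

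The only genuinely delicate point is Step 1: justifying that a set-wise symmetry of the spherical cell must permute its four vertices. If one prefers to avoid the cone argument, an alternative is to note that $g$ maps the boundary $\Gamma_{m,k}$ to itself and that the vertices are the only non-smooth points of this geodesic polygon. The parity computation in Step 2 is routine once the axis angles of the reflections $AB^j$ are identified.
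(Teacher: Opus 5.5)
Your proof is correct, and it takes a different route from the paper's.

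\textbf{The paper's route.} The paper writes $g=(A^\varepsilon B^r,A^\varepsilon C^s)$ and computes the images of the four vertices. This gives an explicit cell $g(C_{1,1})=C_{i',j'}$. It then solves $i'\equiv 1$ and $j'\equiv 1$ modulo $2m+2$ and $2k+2$, and the evenness of the moduli rules out $\varepsilon=1$.

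\textbf{Your route.} You start from a stabilizing $g$ and use the simplicial-cone structure to show that $g$ permutes $\{P_1,P_2\}$ and $\{Q_1,Q_2\}$ separately. You then analyze the two $O(2)$ blocks directly. The same parity obstruction shows up there: the axes of the reflections $AB^j$ lie at integer multiples of $\theta$, while an axis swapping $Q_1$ and $Q_2$ would lie at $\theta/2$.

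\textbf{What each buys.} The paper's computation produces the whole orbit of $C_{1,1}$ at once, and that is what it reuses in Lemma~\ref{reflexion_par} and in the orbit count. Your argument is local and self-contained. It also makes explicit a fact the paper uses silently when it compares indices: a cell determines its vertex set, so distinct index pairs give distinct cells.

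\textbf{A caveat on the paper's indices.} Your proof also avoids the paper's vertex bookkeeping, which is not right as written. For example, $(A,A)=r_{\gamma_1}$ fixes $P_1$ and $Q_1$, whereas the stated formula $g(P_1)=P_{2s+\varepsilon+1}$ would send $P_1$ to $P_2$. With the paper's matrices, one actually gets $g(C_{1,1})=C_{1-2s,\,1-2r}$ for $\varepsilon=0$ and $g(C_{1,1})=C_{2s,\,2r}$ for $\varepsilon=1$, with indices modulo $2m+2$ and $2k+2$. The parity argument still works with these corrected indices, so the lemma itself is unaffected. Your approach simply does not depend on getting them right.
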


\begin{proof}
By Proposition~\ref{prop:group-structure}, every element of $G_{\Gamma_{m,k}}$ can be written uniquely in the form
\[
g=(A^\varepsilon B^r,A^\varepsilon C^s),
\qquad
\varepsilon\in\{0,1\},\quad 0\le r\le k,\quad 0\le s\le m.
\]
From the explicit action of the generators on the circles $C_1$ and $C_2$, one checks that
\[
g(P_1)=P_{2s+\varepsilon+1},
\qquad
g(Q_1)=Q_{2r+\varepsilon+1},
\]
with indices taken modulo $2m+2$ and $2k+2$, respectively. Likewise,
\[
g(P_2)=P_{2s+\varepsilon+2},
\qquad
g(Q_2)=Q_{2r+\varepsilon+2}.
\]
Therefore
\[
g(C_{1,1})=C_{\,2s+\varepsilon+1,\;2r+\varepsilon+1}.
\]
Assume now that
\[
g(C_{1,1})=C_{1,1}.
\]
Then
\[
2s+\varepsilon+1\equiv 1 \pmod{2m+2},
\qquad
2r+\varepsilon+1\equiv 1 \pmod{2k+2},
\]
that is,
$ 2s+\varepsilon\equiv 0 \pmod{2m+2},
$ and $
2r+\varepsilon\equiv 0 \pmod{2k+2}.
$
Since both moduli are even, these congruences force $\varepsilon=0$. Hence
\[
2s\equiv 0 \pmod{2m+2},
\qquad
2r\equiv 0 \pmod{2k+2}.
\]
Using the bounds
$0\le s\le m,
$ and $0\le r\le k,
$ we conclude that
$s=0,
$ and $
r=0.
$ Therefore $g=(I,I)=\mathrm{id}.
$
\end{proof}

\begin{lemma}\label{reflexion_par}
For every $g\in G_{\Gamma_{m,k}}$, if
$g(C_{1,1})=C_{i,j},
$
then
$i+j\equiv 0 \pmod 2.
$
\end{lemma}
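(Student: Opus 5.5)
The plan is to reuse the index computation from the proof of Lemma~\ref{reflexiones c1,1}. By Proposition~\ref{prop:group-structure}, I will write an arbitrary element as
\[
g=(A^\varepsilon B^r,\,A^\varepsilon C^s),\qquad \varepsilon\in\{0,1\},\quad 0\le r\le k,\quad 0\le s\le m.
\]
The point is that the \emph{same} exponent $\varepsilon$ appears in both blocks, because $(A,A)$ is the only generator that is not a rotation.

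The first step is to confirm the vertex action. On $C_1$, parametrized by $P(t)=\sin t\,e_3+\cos t\,e_4$, the rotation $C$ sends $t\mapsto t-2\beta$ (or $t+2\beta$, depending on orientation). The reflection $A$ (negating $x_3$) sends $t\mapsto -t$. Since $P_i=P((i-1)\beta)$, this gives
\[
C^s(P_i)=P_{i\mp 2s},\qquad A(P_i)=P_{2-i}.
\]
The analogous formulas hold on $C_2$, with $B$ and $\theta$ in place of $C$ and $\beta$.

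Next, a cell is determined by its two pairs of consecutive vertices: $C_{i,j}$ has vertex set $\{P_i,P_{i+1}\}\cup\{Q_j,Q_{j+1}\}$. So the pair of indices $(i,j)$ attached to $g(C_{1,1})$ is the smaller index of $g\{P_1,P_2\}$ and of $g\{Q_1,Q_2\}$.

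Now take the two cases on $\varepsilon$.

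\textbf{Case $\varepsilon=0$.} Here $g\{P_1,P_2\}=\{P_{1\mp2s},P_{2\mp2s}\}$, so $i$ is odd. Likewise $j$ is odd.

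\textbf{Case $\varepsilon=1$.} $A$ sends $\{P_1,P_2\}$ to $\{P_1,P_0\}$, whose smaller index is $0\equiv 2m+2$. Composing with $C^s$ shifts this by an even amount, so $i$ is even. Likewise $j$ is even.

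In both cases $i\equiv j\pmod 2$, which is $i+j\equiv 0\pmod 2$. Reducing indices modulo the even numbers $2m+2$ and $2k+2$ does not change parity, so the conclusion is well defined. This matches the formula $g(C_{1,1})=C_{2s+\varepsilon+1,\,2r+\varepsilon+1}$ from Lemma~\ref{reflexiones c1,1}, whose index sum is $2(r+s+\varepsilon+1)$.

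The only delicate step is the orientation and labelling convention. I need to check that a reflection swaps the two vertices of an edge, so that the cell's label is the lower index of the image pair and not $g(P_1)$ itself. I also need to check that the shifts are by $2s$ and not by $s$; this holds because $C$ rotates by $2\beta$. Once the labelling is handled as above, the parity of $i$ and of $j$ both equal $1+\varepsilon$, and the statement follows.
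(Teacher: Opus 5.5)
Your proposal is correct and is essentially the paper's argument: you write $g=(A^\varepsilon B^r,A^\varepsilon C^s)$ using the normal form of Proposition~\ref{prop:group-structure}, read off the cell indices from the images of $\{P_1,P_2\}$ and $\{Q_1,Q_2\}$, and observe that both indices have parity $1+\varepsilon$ (the paper writes this as $i+j=2(r+s+\varepsilon+1)$). Your explicit vertex computation is more careful than the paper's: it gives the exact labels $(1-2s,1-2r)$ for $\varepsilon=0$ and $(2s,2r)$ for $\varepsilon=1$, which agree with the paper's formula $C_{2s+\varepsilon+1,\,2r+\varepsilon+1}$ only modulo $2$ --- so your remark that the two ``match'' holds only at the level of parity, which is all the lemma needs.
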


\begin{proof}
Let
$g=(A^\varepsilon B^r,A^\varepsilon C^s),
$ $
\varepsilon\in\{0,1\} $ and $ 0\le r\le k,\quad 0\le s\le m.
$
By the description of the action of $G_{\Gamma_{m,k}}$ on the cells,
\[
g(C_{1,1})=C_{\,2s+\varepsilon+1,\;2r+\varepsilon+1}.
\]
Hence
$i=2s+\varepsilon+1,$ and $
j=2r+\varepsilon+1,
$. Therefore
\[
i+j
=
(2s+\varepsilon+1)+(2r+\varepsilon+1)
=
2(r+s+\varepsilon+1),
\]
which is even.
\end{proof}

The preceding lemma shows that the orbit of $C_{1,1}$ is contained in the set of cells $C_{i,j}$ with $i+j$ even. Conversely, there are exactly
\[
\frac{(2m+2)(2k+2)}{2}=2(m+1)(k+1)
\]
such cells. On the other hand, by Lemma~\ref{reflexiones c1,1}, the stabilizer of $C_{1,1}$ is trivial. Hence, by the orbit--stabilizer theorem,
\[
|G_{\Gamma_{m,k}}\cdot C_{1,1}|=|G_{\Gamma_{m,k}}|=2(m+1)(k+1).
\]
It follows that the orbit of $C_{1,1}$ consists precisely of the cells $C_{i,j}$ for which $i+j$ is even. Since
\[
\xi_{m,k}=\bigcup_{g\in G_{\Gamma_{m,k}}} g(\mathcal M_{m,k})
\]
and
\[
g(\mathcal M_{m,k})\subset g(C_{1,1})
\qquad\text{for all } g\in G_{\Gamma_{m,k}},
\]
we conclude that $\xi_{m,k}$ is contained in the union of the cells $C_{i,j}$ with $i+j$ even. In particular,
\[
\xi_{m,k}\cap \operatorname{int}(C_{i,j})=\varnothing
\qquad\text{whenever } i+j \text{ is odd}.
\]

\begin{remark}\label{rem:H_structure}
By Proposition~\ref{prop:group-structure}, the subgroup
$H:=\langle (B,I),(I,C)\rangle
$
satisfies
$H\cong \mathbb Z_{k+1}\times \mathbb Z_{m+1} $, it is a normal subgroup of index $2$ in $G_{\Gamma_{m,k}}$ and $
|H|=(m+1)(k+1),
$. In particular,
$G_{\Gamma_{m,k}}=H\sqcup (A,A)H.
$
\end{remark}

\begin{figure}[htbp]
    \centering
    \begin{tikzpicture}[x=1.5cm, y=1.1cm, font=\small]
        
        \fill[black!20] (0,0) rectangle (1,1);
        \fill[black!20] (2,0) rectangle (3,1);
        \fill[black!20] (5,0) rectangle (6,1);
        \fill[black!20] (1,1) rectangle (2,2);
        \fill[black!20] (6,1) rectangle (7,2);
        \fill[black!20] (1,4) rectangle (2,5);
        \fill[black!20] (5,4) rectangle (6,5);
        \fill[black!20] (2,5) rectangle (3,6);
        \fill[black!20] (0,5) rectangle (1,6);
        \fill[black!20] (6,5) rectangle (7,6);

        
        \foreach \y in {0, 1, 2, 4, 5, 6} {
            \draw (0,\y) -- (3.5,\y);
            \draw (4.5,\y) -- (7,\y);
        }
        
        \foreach \x in {0, 1, 2, 3, 5, 6, 7} {
            \draw (\x,0) -- (\x,2.2);
            \draw (\x,3.8) -- (\x,6);
        }

        \foreach \y in {0, 1, 2, 4, 5, 6} {
            \draw[dashed] (3.5,\y) -- (4.5,\y);
        }
        \foreach \x in {0, 1, 2, 3, 5, 6, 7} {
            \draw[dashed] (\x,2.2) -- (\x,3.8);
        }

        
        \node[below] at (0,-0.1) {$P_1$};
        \node[below] at (1,-0.1) {$P_2$};
        \node[below] at (2,-0.1) {$P_3$};
        \node[below] at (3,-0.1) {$P_4$};
        \node[below] at (5,-0.1) {$P_{k-2}$};
        \node[below] at (6,-0.1) {$P_{k-1}$};
        \node[below] at (7,-0.1) {$P_k$};

        \node[left] at (-0.1,0) {$Q_1$};
        \node[left] at (-0.1,1) {$Q_2$};
        \node[left] at (-0.1,2) {$Q_3$};
        \node[left] at (-0.1,4) {$Q_{m-2}$};
        \node[left] at (-0.1,5) {$Q_{m-1}$};
        \node[left] at (-0.1,6) {$Q_m$};

    \end{tikzpicture}
    \caption{Representación aplanada de la descomposición en celdas de $\mathbb{S}^3$ inducida por el grupo de reflexión de Lawson $G_{\Gamma_{m,k}}$. Las líneas continuas corresponden a las geodésicas (círculos máximos) que forman los bordes de cada parche $\mathcal{M}_{m,k}$. El sombreado gris visualiza el patrón de "torcido" (parity twist) que imponen los parámetros $m$ y $k$.}
    \label{fig:tiling_diagram}
\end{figure}
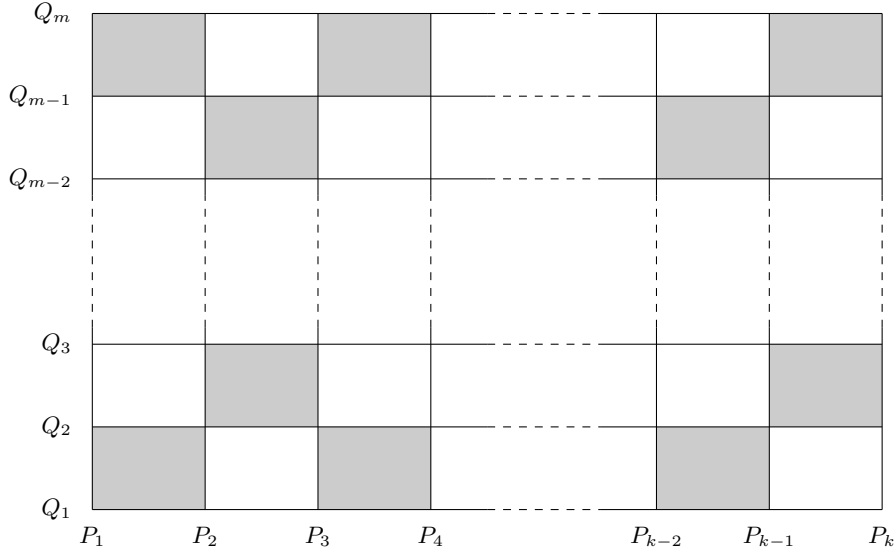

\subsection{The first eigenvalue of Lawson surfaces}

We now formulate the eigenvalue argument for Lawson surfaces in a form that is consistent with the symmetry and nodal considerations established above. The cell decomposition described above will serve to localize the fundamental patch and to isolate the topological obstruction needed in the nodal argument. Let
$\xi_{m,k}\subset \mathbb S^3
$ be the Lawson surface obtained by successive Schwarz reflections of the fundamental minimal patch
\[
\Sigma_{m,k}\subset C_{1,1}.
\]
Recall that $\Sigma_{m,k}$ is a simply connected minimal disk whose boundary consists of four geodesic arcs. The key topological input is the following. The following lemma isolates the only additional topological input needed in the nodal argument.

\begin{lemma}[Topological nodal obstruction]\label{lem:quadrant-nodal-obstruction-safe}
Let $\psi$ be a nontrivial $G_{\Gamma_{m,k}}$-invariant eigenfunction on $\xi_{m,k}$, and let
\[
\mathcal N(\psi):=\psi^{-1}(0)
\]
be its nodal set. Assume that there exists a connected component
\[
\alpha\subset \mathcal N(\psi)\cap \overline{\Sigma_{m,k}}
\]
such that:

\begin{enumerate}
    \item $\alpha$ is a compact embedded arc meeting
    \[
    \operatorname{int}(\Sigma_{m,k});
    \]
    \item $\alpha$ separates $\Sigma_{m,k}$ into exactly two connected components, denoted by $U$ and $V$;
    \item one of these components, say $U$, is disjoint from a reflecting edge of $\partial\Sigma_{m,k}$;
    \item if $\sigma$ denotes the corresponding generating reflection, then $U$ and $\sigma(U)$ lie in distinct connected components of
    \[
    \xi_{m,k}\setminus \mathcal N(\psi).
    \]
\end{enumerate}

Then
\[
\xi_{m,k}\setminus \mathcal N(\psi)
\]
has at least three connected components.
\end{lemma}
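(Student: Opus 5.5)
The plan is to produce three distinct nodal domains of $\psi$ directly: the domain containing $U$, the domain containing $\sigma(U)$, and a third domain that meets $V$. Hypothesis (4) already supplies the first two. The work lies in constructing the third and showing it is different from both.

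\emph{Step 1: $\psi$ has a fixed sign on $U$ and on $V$.} Write $\Omega_U$ for the component of $\xi_{m,k}\setminus\mathcal N(\psi)$ containing (a nonzero point of) $U$. Since $\psi$ is real-analytic and nontrivial, $\mathcal N(\psi)$ has empty interior. We would first refine the choice of $\alpha$ so that $U$ contains no other nodal points. If this is impossible, $U$ already contains further nodal curves, and those produce extra nodal domains directly. With $U\cap\mathcal N(\psi)=\varnothing$, the set $U$ is connected and free of zeros, so $U\subset\Omega_U$ and $\psi$ has constant sign on $U$.

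By $G_{\Gamma_{m,k}}$-invariance, $\psi\circ\sigma=\psi$. Hence $\sigma(\Omega_U)$ is a nodal domain containing $\sigma(U)$, and it carries the same sign as $\Omega_U$. Hypothesis (4) says $\sigma(\Omega_U)\neq\Omega_U$. This gives two distinct nodal domains of the same sign.

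\emph{Step 2: a domain of the opposite sign.} Near an interior point of $\alpha$ where $\nabla\psi\neq0$, $\psi$ changes sign across $\alpha$. Such points exist because the singular part of the nodal set of an eigenfunction on a surface is discrete. The side of $\alpha$ facing $V$ therefore contains points where $\psi$ has the sign opposite to its sign on $U$. Let $\Omega_V$ be the nodal domain through such a point. Since $\psi$ has opposite signs on $\Omega_V$ and on $\Omega_U$, $\Omega_V$ differs from $\Omega_U$. It also differs from $\sigma(\Omega_U)$, because the sign is preserved by $\sigma$. We therefore obtain three distinct components: $\Omega_U$, $\sigma(\Omega_U)$ and $\Omega_V$.

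\emph{Main obstacle.} The delicate point is the sign-change claim in Step 2, together with the refinement in Step 1. A nodal arc could in principle be a curve along which $\psi$ vanishes to even order without changing sign. For eigenfunctions of $\Delta+\lambda$ this cannot happen along a whole arc. Near a regular zero the nodal set is a smooth curve with transverse gradient. At a critical zero, the local structure from Bers' theorem gives equiangular nodal rays with alternating signs, so the two sides of every nodal arc carry opposite signs.

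This is the one analytic input needed beyond the hypotheses. We would cite it in the standard Cheng/Bers form. Hypothesis (3) is used only to make $\sigma$ well defined as the reflection across an edge not touched by $U$, so that $\sigma(U)$ lies in the adjacent copy $\sigma(\Sigma_{m,k})$ and (4) is meaningful.
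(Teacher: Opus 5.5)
Your argument follows the same route as the paper's proof. It uses the nodal domain $\Omega_U$ containing $U$, its image $\sigma(\Omega_U)$ (same sign by invariance, distinct by hypothesis (4)), and a domain of the opposite sign across $\alpha$. Your Step 2 is more careful than the paper. The paper asserts ``by continuity'' that $\psi$ has constant, opposite signs on $U$ and $V$, but constancy on $V$ can fail because $V$ may contain further nodal curves. You correctly observe that one point of opposite sign suffices.

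The step to repair is the start of Step 1. You may not ``refine the choice of $\alpha$'': hypotheses (2)--(4) are attached to the given arc and the given $U$, and a different arc need not satisfy them. The fallback claim, that additional nodal curves inside $U$ ``produce extra nodal domains directly'', is unjustified. Extra nodal curves need not increase the number of nodal domains; on a surface of positive genus, a disconnected nodal set can still have a complement with exactly two components. Neither move is needed. Hypothesis (4) says that $U$ lies in a connected component of $\xi_{m,k}\setminus\mathcal N(\psi)$. Hence $U\cap\mathcal N(\psi)=\varnothing$, and $\psi$ has constant sign on the connected set $U$. Delete the refinement and cite (4) instead.

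The third domain also does not require Bers' theorem. Since $\alpha\subset\mathcal N(\psi)$ is nonempty, $\psi$ is not a nonzero constant, so its eigenvalue $\lambda$ is positive and $\lambda\int_{\xi_{m,k}}\psi\,dV=-\int_{\xi_{m,k}}\Delta\psi\,dV=0$. Thus $\psi$ attains the sign opposite to its sign on $\Omega_U$. The nodal domain through such a point differs from both $\Omega_U$ and $\sigma(\Omega_U)$. This also spares you checking that the regular point you pick on $\alpha$ actually has $U$ on one side, which your Step 2 tacitly assumes.
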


\begin{proof}
Since $\Sigma_{m,k}$ is simply connected, it is homeomorphic to a closed disk. By assumption, the arc $\alpha$ is embedded in $\overline{\Sigma_{m,k}}$ and meets the interior of $\Sigma_{m,k}$. Hence $\alpha$ separates $\Sigma_{m,k}$ into exactly two connected open sets
\[
\Sigma_{m,k}\setminus \alpha = U\cup V,
\qquad
U\cap V=\varnothing,
\]
with both $U$ and $V$ nonempty. Because $\alpha\subset \mathcal N(\psi)$ and $\psi$ is continuous, the function $\psi$ has constant sign on each of $U$ and $V$, and these signs are opposite. Since $\psi$ is invariant under the generating reflection $\sigma$, the sets $U$ and $\sigma(U)$ have the same sign. By assumption, however, they lie in distinct connected components of
\[
\xi_{m,k}\setminus \mathcal N(\psi).
\]
Thus $U$ and $\sigma(U)$ determine two distinct nodal domains of the same sign, while $V$ belongs to a nodal domain of the opposite sign. Therefore
\[
\xi_{m,k}\setminus \mathcal N(\psi)
\]
has at least three connected components.
\end{proof}
We can now state the eigenvalue theorem.
\begin{figure}[h]
\includegraphics[width=8cm,height=5.5cm]{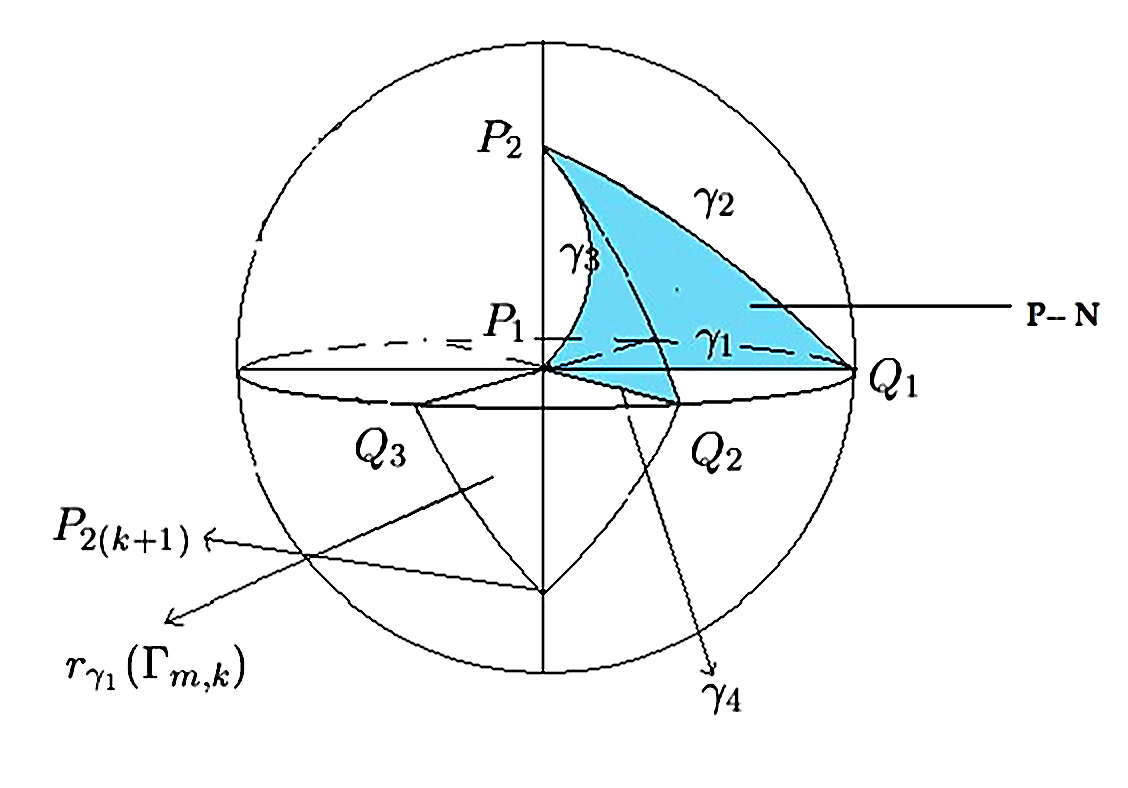}
\caption{Geometric configuration of the fundamental domain $\Gamma_{m,k}$ in $\mathbb{S}^3$. The shaded region highlights the initial patch $\mathcal{M}_{m,k}$ containing the Positive and Negative (\textbf{P--N}) nodal domains, while $r_{\gamma_1}(\Gamma_{m,k})$ illustrates its extension via geodesic reflection.}
\end{figure}
\begin{theorem}\label{thm:lawson-lambda1}
Let $\xi_{m,k}\subset \mathbb S^3$ be a Lawson surface, and let
\[
\Sigma_{m,k}\subset C_{1,1}
\]
be its fundamental minimal patch. Assume that:

\begin{enumerate}
    \item $\Sigma_{m,k}$ is simply connected;
    \item $\partial \Sigma_{m,k}$ consists of four geodesic boundary arcs;
    \item every nontrivial $G_{\Gamma_{m,k}}$-invariant nodal set meeting
  $\operatorname{int}(\Sigma_{m,k})
    $ contains a connected component satisfying the hypotheses of Lemma~\ref{lem:quadrant-nodal-obstruction-safe}.
\end{enumerate}

Then
\[
\lambda_1(\xi_{m,k})=2.
\]
\end{theorem}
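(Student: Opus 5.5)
We argue by contradiction. Takahashi's theorem gives $\lambda_1(\xi_{m,k})\le 2$, because the coordinate functions $x_1,\dots,x_4$ restricted to $\xi_{m,k}$ are nonconstant eigenfunctions with eigenvalue $n=2$. So it suffices to rule out $\lambda_1(\xi_{m,k})<2$.

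Assume $\lambda_1<2$ and let $\phi$ be a first eigenfunction. The first step is to make $\phi$ invariant under the full group $G_{\Gamma_{m,k}}$. We apply Lemma~\ref{lem:reflection_invariance_first_eigenfunction} with $G=G_{\Gamma_{m,k}}$, viewing each generator $r_{\gamma_i}$ as a reflection. Note that $r_{\gamma_i}$ is a rotation by $\pi$ about the great circle $\gamma_i$, not a reflection across a single equatorial sphere, so the lemma does not apply verbatim. We therefore rerun its proof. Form the antisymmetric part $\psi=\phi-\phi\circ r_{\gamma_i}$. If $\psi\not\equiv 0$, it is a first eigenfunction that vanishes on $\gamma_i\cap\xi_{m,k}$ and is odd under $r_{\gamma_i}$. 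We then pair $\psi$ against a coordinate eigenfunction $f_v$ with $v\perp\Pi_{\gamma_i}$, chosen so that both functions change sign across $\gamma_i$. Using Corollary~\ref{courant_first_corollary}, $\psi$ has exactly two nodal domains. If these two domains can be identified with the two sides of $\xi_{m,k}\cap \mathbb S^2(v)$, then $\int\psi f_v>0$, which contradicts $L^2$-orthogonality of eigenfunctions with eigenvalues $\lambda_1<2$. Doing this for each generator shows that $\phi$ is $G_{\Gamma_{m,k}}$-invariant.

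Next we localize to the patch. Because $\phi$ is $G$-invariant and $\xi_{m,k}=\bigcup_g g(\Sigma_{m,k})$, the function $\phi$ is determined by its restriction to $\overline{\Sigma_{m,k}}$. It also has mean zero, so it changes sign. We claim $\mathcal N(\phi)$ must meet $\operatorname{int}(\Sigma_{m,k})$. Otherwise $\phi$ has constant sign on the interior of the patch, hence on the interior of every translate $g(\Sigma_{m,k})$ by invariance, hence on all of $\xi_{m,k}$ (the boundary arcs have measure zero). This contradicts $\int\phi=0$.

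So $\mathcal N(\phi)$ is a nontrivial $G$-invariant nodal set meeting $\operatorname{int}(\Sigma_{m,k})$. Hypothesis (3) provides a component $\alpha$ satisfying the assumptions of Lemma~\ref{lem:quadrant-nodal-obstruction-safe}, and hypotheses (1) and (2) supply the disk structure that lemma uses. The lemma then gives at least three nodal domains, contradicting Corollary~\ref{courant_first_corollary}. Hence $\lambda_1=2$.

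The main obstacle is the invariance step. We must verify that $\xi_{m,k}\setminus\gamma_i$, or the relevant equatorial cut, splits into exactly two components exchanged by the symmetry. This is what pins the two nodal domains of $\psi$ to the two sides of the cut and gives the positive integral. Here I would use the parity-twisted cell structure for $m,k$ even established after Lemma~\ref{reflexion_par}: $\xi_{m,k}$ lies in the cells $C_{i,j}$ with $i+j$ even. I would try to use this to check, for each generator, that $\psi$ and $f_v$ have the same sign on each cell pair exchanged by $r_{\gamma_i}$, and hence on each of the two sides of the cut.
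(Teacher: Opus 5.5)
Your outline is the same as the paper's. Takahashi gives $\lambda_1\le 2$; a first eigenfunction is made $G_{\Gamma_{m,k}}$-invariant; its nodal set must meet $\operatorname{int}(\Sigma_{m,k})$; and hypothesis (3) with Lemma~\ref{lem:quadrant-nodal-obstruction-safe} contradicts Corollary~\ref{courant_first_corollary}.

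Your diagnosis of the invariance step is correct. $r_{\gamma_i}$ has determinant $+1$ and fixes only the great circle $\gamma_i$. So Lemma~\ref{lem:reflection_invariance_first_eigenfunction}, which concerns reflections across equatorial spheres cutting $M$ into two swapped pieces, does not apply. The paper's proof simply invokes that lemma here without checking its hypotheses. But you do not close the step either, and it cannot be skipped: hypothesis (3) and Lemma~\ref{lem:quadrant-nodal-obstruction-safe} only concern $G_{\Gamma_{m,k}}$-invariant eigenfunctions. So your proof has a genuine gap exactly at ``if these two domains can be identified with the two sides''.

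Moreover, the repair you sketch cannot work, for three reasons.

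First, already $\xi_{m,k}\setminus\gamma_1$ is connected. The map $r_{\gamma_1}=(A,A)$ fixes $P_1,Q_1$ and sends $P_2,Q_2$ to $P_{2m+2},Q_{2k+2}$. Hence $\mathcal M_{m,k}\subset C_{1,1}$ and $r_{\gamma_1}(\mathcal M_{m,k})\subset C_{2m+2,2k+2}$ lie on opposite sides of the edge $P_1Q_1\subset\gamma_1$. Yet they are joined in $\xi_{m,k}\setminus\gamma_1$ through the patches in $C_{2m+2,2}$ and $C_{2m+1,1}$ (all these cells have even index sum). The path crosses the edges $P_1Q_2$, $P_{2m+2}Q_2$, $P_{2m+2}Q_1$, none of which lies on $\gamma_1=\{x_1=x_3=0\}$. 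For $m=k=1$ this is the familiar fact that a great circle on the Clifford torus does not separate it. So there are no ``two sides'' to which the nodal domains of $\psi$ could be pinned.

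Second, for $v\perp\Pi_{\gamma_i}$ the half-turn preserves $\mathbb S^2(v)$ but fixes only $\gamma_i$ inside it. So $\psi$ need not vanish on the rest of $\xi_{m,k}\cap\mathbb S^2(v)$, and nothing ties its nodal domains to $\xi_{m,k}\cap H_\pm(v)$.

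Third, the parity structure of the cells gives no sign information about $\psi$. Since $\psi$ and $f_v$ are both odd under $r_{\gamma_i}$, you only get $(\psi f_v)\circ r_{\gamma_i}=\psi f_v$, which says nothing about the sign of $\int\psi f_v$.

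The missing idea is to use genuine mirror symmetries of $\xi_{m,k}$ across great $2$-spheres. Examples are the sphere through $C_2$ and the midpoint of $P_1P_2$, and the one through $C_1$ and the midpoint of $Q_1Q_2$. For these, the proof of Lemma~\ref{lem:reflection_invariance_first_eigenfunction} goes through once you have the two-piece property for a sphere of symmetry, as in Choe and Soret. The nodal argument would then have to be run for that mirror group, with hypothesis (3) restated for it, since the half-turns $r_{\gamma_i}$ do not lie in the group generated by these mirrors.
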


\begin{proof}
Since $\xi_{m,k}$ is a closed embedded minimal surface in $\mathbb S^3$, the coordinate functions are eigenfunctions of the Laplace--Beltrami operator with eigenvalue $2$. Indeed, if
\[
X:\xi_{m,k}\to \mathbb S^3\subset \mathbb R^4
\]
denotes the position vector, then
\[
\Delta_{\xi_{m,k}}X+2X=0.
\]
Taking the Euclidean inner product with any fixed vector $v\in \mathbb R^4$, we obtain
\[
\Delta_{\xi_{m,k}}\langle X,v\rangle +2\langle X,v\rangle=0.
\]
Hence
\[
\lambda_1(\xi_{m,k})\le 2.
\]

Assume for contradiction that
\[
\lambda_1(\xi_{m,k})<2.
\]
Let $\psi$ be a first eigenfunction on $\xi_{m,k}$. By Lemma~\ref{lem:reflection_invariance_first_eigenfunction}, the strict inequality
\[
\lambda_1(\xi_{m,k})<2
\]
implies that $\psi$ is invariant under the full reflection group $G_{\Gamma_{m,k}}$. Let
\[
\mathcal N(\psi):=\psi^{-1}(0)
\]
be the nodal set of $\psi$. Since $\psi$ is a first eigenfunction, Corollary~\ref{courant_first_corollary} implies that
\[
\xi_{m,k}\setminus \mathcal N(\psi)
\]
has exactly two connected components. We claim that
\[
\mathcal N(\psi)\cap \operatorname{int}(\Sigma_{m,k})\neq\varnothing.
\]
Indeed, if
\[
\mathcal N(\psi)\cap \operatorname{int}(\Sigma_{m,k})=\varnothing,
\]
then $\Sigma_{m,k}$ is contained in a single nodal domain of $\psi$. Since $\psi$ is invariant under the generating reflections, every reflected copy of $\Sigma_{m,k}$ is contained in a nodal domain of the same sign. Because these reflected copies cover $\xi_{m,k}$, it would follow that $\psi$ has constant sign on $\xi_{m,k}$, contradicting the orthogonality relation
\[
\int_{\xi_{m,k}}\psi\,dV=0.
\]
Therefore
$\mathcal N(\psi)\cap \operatorname{int}(\Sigma_{m,k})\neq\varnothing.
$ By assumption, the set
\[
\mathcal N(\psi)\cap \overline{\Sigma_{m,k}}
\]
contains a connected component satisfying the hypotheses of Lemma~\ref{lem:quadrant-nodal-obstruction-safe}. Hence
\[
\xi_{m,k}\setminus \mathcal N(\psi)
\]
has at least three connected components. This contradicts Courant's nodal domain theorem for a first eigenfunction. Therefore the assumption
\[
\lambda_1(\xi_{m,k})<2
\]
is impossible, and we conclude that
\[
\lambda_1(\xi_{m,k})=2.
\]
\end{proof}

\begin{remark}\label{rem:lawson-lambda1}
The theorem above reduces the equality
\[
\lambda_1(\xi_{m,k})=2
\]
to a topological statement about $G_{\Gamma_{m,k}}$-invariant nodal sets in the fundamental patch $\Sigma_{m,k}$. More precisely, once the separation property in Lemma~\ref{lem:quadrant-nodal-obstruction-safe} is verified geometrically for the Lawson tessellation, the eigenvalue conclusion follows immediately.
\end{remark}

\section*{Acknowledgements}

The authors would like to express their deepest gratitude to Professor Dr. Gonzalo García Camacho from the Universidad del Valle in Cali, Colombia, for his invaluable support and insightful guidance throughout the development of this research. His expertise and fundamental contributions provided a significant cornerstone for the completion of this work.


\bibliographystyle{plain}
\bibliography{Bibliografia-2020-07}

\vspace{5mm} 

\end{document}